\newtheoremstyle{df}{8pt}{}{}{0cm}{\bf}{.\ }{0pt}{}
\numberwithin{equation}{section}
\newtheorem{tw}{Theorem}[section]
\newtheorem{lm}[tw]{Lemma}
\newtheorem{stw}[tw]{Proposition}
\newtheorem{wn}[tw]{Corollary}
\theoremstyle{df}
\renewenvironment{proof}[1][\proofname]{{\fontfamily{pcr}\selectfont #1.}}{\qed\newline}
\newcommand{\R}{\mathbb{R}}
\newcommand{\X}{\mathcal{X}}
\newcommand{\D}{\operatorname{D}}
\newcommand{\id}{\operatorname{id}}
\newcommand{\DD}{\mathcal{D}}
\newcommand{\Dh}{\mathcal{D}_h}
\begin{document}
\baselineskip=17pt
\title{$\widetilde{J}$-tangent affine hypersurfaces with an induced almost paracontact structure}
\author{Zuzanna Szancer}
%\address{Zuzanna Szancer\\
%            Instytut Matematyki\\
%         UJ\\
%        ul. Reymonta 4, 30-059 Kraków, Polska}
%\email{Zuzanna.Szancer@im.uj.edu.pl}
%
%\urladdr{http://...........}

%\thanks{}
\keywords{Affine hypersurface, almost paracontact structure, Sasakian structure, hyperquadric}
\subjclass[2010]{53A15, 53D15}
%\date{June 5, 2007}
\begin{abstract}
We study real affine hypersurfaces $f\colon M\rightarrow \mathbb{R}^{2n+2}$ with an almost paracontact structure $(\varphi ,\xi,\eta)$ induced by a $\widetilde{J}$-tangent transversal vector filed, where $\widetilde{J}$ is the canonical paracomplex structure on $\mathbb{R}^{2n+2}$. We give a classification of hypersurfaces for which an induced almost paracontact structure is metric relative to the second fundamental form. Some other properties of such hypersurfaces are also studied.
\end{abstract}

\maketitle

\section{Introduction}

\par Paracontact metric structures were introduced in \cite{11} by S. Kaneyuki and F. L. Williams.
The importance of paracontact geometry, and in particular of para-Sasakian geometry, has been indicated in the recent years by many authors.
Its role in pseudo-Riemannian geometry as well as in mathematical physics was emphasized in several papers (see e.g. \cite{1}, \cite{2}, \cite{6}, \cite{18} ).
Recently (\cite{IKE}) I. K\"upeli Erken studied normal almost paracontact metric manifolds provided they satisfy some additional projective flatness conditions.
\par Relations between affine differential geometry and paracomplex geometry can be found in \cite{7} and \cite{LS} for example.
Moreover, affine immersions with an almost product structures are also studied (see e.g. \cite{K}).

In \cite{SZM} the author studied affine hypersurfaces with an arbitrary $J$-tangent transversal vector field, where $J$ was the canonical complex structure on $\mathbb{R}^{2n+2}\cong\mathbb{C}^{n+1}$. It was proved that if the induced almost contact structure is metric relative to the second fundamental form then it is a Sasakian structure and the hypersurface itself is a piece of hyperquadric.
In this paper we study affine hypersurfaces $f\colon M\rightarrow \mathbb{R}^{2n+2}$ with an arbitrary
$\widetilde{J}$-tangent transversal vector field, where $\widetilde{J}$ is the canonical paracomplex structure on $\mathbb{R}^{2n+2}$. Such a vector field induces in a
natural way an almost paracontact structure $(\varphi,\xi,\eta)$ as well as
the second fundamental form $h$. We prove that if
$(\varphi,\xi,\eta,h)$ is an almost paracontact metric structure then it
is a para~$\alpha $-Sasakian structure with $\alpha =-1$. Moreover, the hypersurface is a piece of a hyperquadric.
\par In Section 2, we briefly recall the basic formulas of affine differential geometry. We introduce the notion of a $\widetilde{J}$-tangent transversal vector field and a $\widetilde{J}$-invariant distribution $\DD$.
\par In section 3 we recall the definitions of an almost paracontact metric structure, para $\alpha$-Sasakian structure and para $\alpha$-contact structure. We introduce the notion of an induced almost paracontact structure and prove some results related to this structure.
\par Section 4 contains main results of this paper. We prove that if $(\varphi, \xi ,\eta,h)$ is an almost paracontact metric structure then the hypersurface is equiaffine and the shape operator $S=-\id$. In consequence, the structure is para $(-1)$-Sasakian. We also prove that the hypersurface is a piece of a hyperquadric and give an explicit formula for it .

\section{Preliminaries}
We briefly recall the basic formulas for affine differential
geometry. For more details, we refer to \cite{NS}. Let $f\colon M\rightarrow\R^{n+1}$ be an orientable
connected differentiable $n$-dimensional hypersurface immersed in
affine space $\R^{n+1}$ equipped with its usual flat connection
$\D$. Then for any transversal vector field $C$ we have
$$
\D_Xf_\ast Y=f_\ast(\nabla_XY)+h(X,Y)C
$$
and
$$
\D_XC=-f_\ast(SX)+\tau(X)C,
$$
where $X,Y$ are tangent vector fields. For any transversal vector field $\nabla$ is a torsion-free connection, $h$ is a symmetric
bilinear form on $M$, called the second
fundamental form, $S$ is a tensor of type $(1,1)$, called the
shape operator and $\tau$ is a 1-form.
\par In this paper we assume that $h$ is nondegenerate so that $h$ defines a
pseudo-Rie\-man\-nian metric on $M$. If $h$ is nondegenerate, then we say that the hypersurface or the
hypersurface immersion is \emph{nondegenerate}. We have the following
\begin{tw}[\cite{NS},Fundamental equations]\label{tw::FundamentalEquations}
For an arbitrary transversal vector field $C$ the induced
connection $\nabla$, the second fundamental form $h$, the shape
operator $S$, and the 1-form $\tau$ satisfy
the following equations:
\begin{align}
\label{eq::Gauss}&R(X,Y)Z=h(Y,Z)SX-h(X,Z)SY,\\
\label{eq::Codazzih}&(\nabla_X h)(Y,Z)+\tau(X)h(Y,Z)=(\nabla_Y h)(X,Z)+\tau(Y)h(X,Z),\\
\label{eq::CodazziS}&(\nabla_X S)(Y)-\tau(X)SY=(\nabla_Y S)(X)-\tau(Y)SX,\\
\label{eq::Ricci}&h(X,SY)-h(SX,Y)=2d\tau(X,Y).
\end{align}
\end{tw}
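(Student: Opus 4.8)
The plan is to derive all four identities from the single fact that the ambient connection $\D$ on $\R^{n+1}$ is flat and torsion-free, so that its curvature operator $R^{\D}(X,Y) = \D_X\D_Y - \D_Y\D_X - \D_{[X,Y]}$ vanishes identically. The strategy is to apply this vanishing operator to the two natural vector fields along $f$, namely $f_\ast Z$ for a tangent field $Z$ and the transversal field $C$, and in each case to decompose the resulting equation into its tangential and transversal components. Since $f$ is an immersion, $f_\ast$ is injective, so the tangential component may be read off directly as an identity on $M$, while the transversal component is a scalar multiple of $C$.

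First I would apply $R^{\D}(X,Y) = 0$ to $f_\ast Z$. Differentiating $\D_Y f_\ast Z = f_\ast(\nabla_Y Z) + h(Y,Z)C$ once more by means of both structural equations yields for $\D_X\D_Y f_\ast Z$ a tangential part $f_\ast(\nabla_X\nabla_Y Z) - h(Y,Z)f_\ast(SX)$ and a transversal part $\big(X(h(Y,Z)) + h(X,\nabla_Y Z) + \tau(X)h(Y,Z)\big)C$. Antisymmetrizing in $X,Y$ and subtracting $\D_{[X,Y]}f_\ast Z = f_\ast(\nabla_{[X,Y]}Z) + h([X,Y],Z)C$, the tangential component collapses, via $R(X,Y)Z = \nabla_X\nabla_Y Z - \nabla_Y\nabla_X Z - \nabla_{[X,Y]}Z$, to the Gauss equation \eqref{eq::Gauss}. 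For the transversal component I would use torsion-freeness of $\nabla$ to write $[X,Y] = \nabla_X Y - \nabla_Y X$ together with $(\nabla_X h)(Y,Z) = X(h(Y,Z)) - h(\nabla_X Y,Z) - h(Y,\nabla_X Z)$; regrouping then gives exactly the Codazzi equation \eqref{eq::Codazzih}.

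Next I would apply $R^{\D}(X,Y)=0$ to the transversal field $C$, differentiating $\D_X C = -f_\ast(SX) + \tau(X)C$ a second time. Here the tangential part of $\D_Y\D_X C$ is $-f_\ast(\nabla_Y SX) - \tau(X)f_\ast(SY)$ and its transversal part is $\big(-h(Y,SX) + Y(\tau(X)) + \tau(X)\tau(Y)\big)C$. Using $(\nabla_X S)(Y) = \nabla_X(SY) - S(\nabla_X Y)$ together with $[X,Y] = \nabla_X Y - \nabla_Y X$, the tangential component of the identity becomes the Codazzi equation for $S$, \eqref{eq::CodazziS}; the transversal component reads $h(X,SY) - h(Y,SX) = X(\tau(Y)) - Y(\tau(X)) - \tau([X,Y])$, which, after invoking the convention $2\,d\tau(X,Y) = X(\tau(Y)) - Y(\tau(X)) - \tau([X,Y])$ and the symmetry of $h$, is the Ricci equation \eqref{eq::Ricci}.

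I expect the only genuine obstacle to be bookkeeping: keeping the tangential and transversal pieces cleanly separated through the two successive differentiations, and recognizing the covariant-derivative combinations $(\nabla_X h)$ and $(\nabla_X S)$ inside the expanded expressions. The normalization factor $2$ in \eqref{eq::Ricci} is purely a matter of the chosen convention for $d\tau$ and carries no substantive difficulty. No geometric input beyond flatness of $\D$, torsion-freeness of both $\D$ and $\nabla$, and injectivity of $f_\ast$ is needed.
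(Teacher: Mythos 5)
Your proposal is correct and coincides with the standard derivation in the cited reference \cite{NS}: the paper itself states this theorem without proof, and the argument there is exactly yours, namely applying the vanishing curvature of the flat ambient connection $\D$ to $f_\ast Z$ and to $C$ and splitting each identity into tangential and transversal parts (Gauss and Codazzi for $h$ from the first, Codazzi for $S$ and Ricci from the second). Your bookkeeping, including the convention $2\,d\tau(X,Y)=X(\tau(Y))-Y(\tau(X))-\tau([X,Y])$, matches the one used elsewhere in the paper, so there is nothing to correct.
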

The equations (\ref{eq::Gauss}), (\ref{eq::Codazzih}),
(\ref{eq::CodazziS}), and (\ref{eq::Ricci}) are called the
equation of Gauss, Codazzi for $h$, Codazzi for $S$ and Ricci,
respectively.
\par For an affine hypersurface the cubic form $Q$ is
defined by the formula
\begin{equation}\label{eq::CubicForm}
Q(X,Y,Z)=(\nabla_Xh)(Y,Z)+\tau(X)h(Y,Z).
\end{equation}
It follows from the equation of Codazzi (\ref{eq::Codazzih}) that
$Q$ is symmetric in all three variables.
\par For a hypersurface immersion $f\colon M\rightarrow \R^{n+1}$
a transversal vector field $C$ is said to be \emph{equiaffine}
(resp. \emph{locally equiaffine}) if $\tau=0$ (resp. $d\tau=0$).
\par Let $\dim M=2n+1$ and $f\colon M\rightarrow \R^{2n+2}$ be
a nondegenerate (relative to the second fundamental form) affine hypersurface. We always assume that $\R^{2n+2}$ is endowed with
the standard paracomplex structure $\widetilde{J}$
$$
\widetilde{J}(x_1,\ldots,x_{n+1},y_1,\ldots,y_{n+1})=(y_1,\ldots,y_{n+1},x_1,\ldots,x_{n+1}).
$$
Let $C$ be a transversal vector field on $M$. We say that $C$ is
\emph{$\widetilde{J}$-tangent} if $\widetilde{J}C_x\in f_\ast(T_xM)$ for every $x\in M$.
We also define a distribution $\DD$ on $M$ as the biggest $\widetilde{J}$-invariant distribution on $M$, that is
$$
\DD_x=f_\ast^{-1}(f_\ast(T_xM)\cap \widetilde{J}(f_\ast(T_xM)))
$$
for every $x\in M$. We have that $\dim\DD _x\geq 2n$. If for some $x$ the $\dim\DD _x=2n+1$ then $\DD _x=T_xM$ and it is not possible to find $\widetilde{J}$-tangent transversal vector field in a neighbourhood of $x$. In this paper we study $f$ with a $\widetilde{J}$-tangent transversal vector field $C$, so in particular $\dim\DD=2n$. The distribution $\DD$ is smooth as the intersection of two smooth distributions and because $\dim \DD$ is constant.  A vector field $X$ is called a \emph{$\DD$-field}
if $X_x\in\DD_x$ for every $x\in M$. We use the notation $X\in\DD$ for vectors as well as for $\DD$-fields.
%It follows from \cite[Prop. 2.5]{NS} that
%the definition of $\Dh$ is independent of choice of a transversal
%vector field.
We say that the distribution $\DD$ is nondegenerate if
$h$ is nondegenerate on $\DD$.
\par Additionally, we define a 1-dimensional distribution $\Dh$ as follows
$$
{\Dh}_x:=\{X\in T_xM\colon h(X,Y)=0\;\forall\;Y\in {\DD}_x\},
$$
where $h$ is the second fundamental form on $M$ relative to any
transversal vector field.
\par To simplify the writing, we will be omitting $f_\ast$ in front of vector fields in most cases.
%From now on we always assume that $f$ is nondegenerate.
\section{Almost paracontact structures}
A $(2n+1)$-dimensional manifold $M$ is said to have an
\emph{almost paracontact structure} if there exist on $M$ a tensor
field $\varphi$ of type (1,1), a vector field $\xi$ and a 1-form
$\eta$ which satisfy
\begin{align}
\varphi^2(X)&=X-\eta(X)\xi,\\
\eta(\xi)&=1
\end{align}
for every $X\in TM$
and the tensor field $\varphi$ induces an almost paracomplex structure on the distribution $\DD=\operatorname{ker}\eta$. That is the eigendistributions $\DD ^{+},\DD ^{-}$ corresponding to the eigenvalues $1,-1$
of $\varphi$ have equal dimension $n$.
 If additionally there is a pseudo-Riemannian metric
$g$ on $M$ of signature $(n+1,n)$ such that
\begin{align}\label{Def::eq::metric}
g(\varphi X,\varphi Y)=-g(X,Y)+\eta(X)\eta(Y)
\end{align}
for every $X,Y\in TM$ then $(\varphi,\xi,\eta,g)$ is called an
\emph{almost paracontact metric structure}. In particular, for an almost paracontact metric structure we have
\begin{align}
\label{eq::etaXhXsi}\eta(X)=g(X,\xi)
\end{align}
for all $X\in TM$. Hence $\xi$ is $g$-orthogonal to $\DD$.

An almost paracontact metric
structure is called  \emph{ para $\alpha$-Sasakian} if
\begin{align}
(\widehat{\nabla}_X\varphi)(Y)=\alpha(-g(X,Y)\xi+\eta(Y)X),
\end{align}
where $\widehat{\nabla}$ is the Levi-Civita connection for $g$ and $\alpha$ is some smooth function on $M$.
In particular, when $\alpha=1$ we get the standard para-Sasakian structure.
An almost paracontact metric manifold is called \emph{para $\alpha$-contact} if
\begin{align}\label{eq::deta}
d\eta (X,Y)=\alpha g(X,\varphi Y)
\end{align}
for a certain non-zero function $\alpha$ and
for every $X,Y\in TM$.
When $\alpha =1$ an almost paracontact metric structure $(\varphi,\xi,\eta,g)$ satysfying (\ref{eq::deta})  is called a
\emph{paracontact metric structure}.

We say that an almost paracontact structure $(\varphi,\xi,\eta)$ is
\emph{normal} if
\begin{align}\label{Def::eq::normality}
[\varphi,\varphi]-2d\eta\otimes\xi=0,
\end{align}
where $[\varphi,\varphi]$ is the Nijenhuis tensor for $\varphi$.
We have the following theorem
\begin{tw}[\cite{OL}]\label{tw::equivalentforparaalfasasakian}
An almost paracontact metric manifold is para $\alpha$-Sasakian if and only if it is normal and para $\alpha$-contact.
\end{tw}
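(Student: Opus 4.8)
The plan is to prove both implications by direct paracontact-geometric computation, using throughout the algebraic identities $\varphi\xi=0$, $\eta\circ\varphi=0$, $g(\xi,\xi)=1$, and the skew-symmetry $g(\varphi X,Y)=-g(X,\varphi Y)$ (the last obtained by substituting $\varphi X$ for $X$ in (\ref{Def::eq::metric}) and using $\eta\circ\varphi=0$). I would also use repeatedly that $\eta(\widehat{\nabla}_X\xi)=0$, which follows by differentiating $g(\xi,\xi)=1$. Since $\widehat{\nabla}$ is torsion-free, every Lie bracket and every Nijenhuis expression can be rewritten through $\widehat{\nabla}$, and this is precisely what couples the three conditions.

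For the forward implication (para $\alpha$-Sasakian $\Rightarrow$ normal and para $\alpha$-contact) I would first put $Y=\xi$ in the para $\alpha$-Sasakian equation: by $\varphi\xi=0$ the left-hand side becomes $-\varphi(\widehat{\nabla}_X\xi)$ and the right-hand side is $\alpha\varphi^2X$, and applying $\varphi$ once more (using $\varphi^3=\varphi$ and $\eta(\widehat{\nabla}_X\xi)=0$) yields the key first-order formula $\widehat{\nabla}_X\xi=-\alpha\varphi X$. Feeding this into $2\,d\eta(X,Y)=g(\widehat{\nabla}_X\xi,Y)-g(\widehat{\nabla}_Y\xi,X)$ and using skew-symmetry of $\varphi$ gives $d\eta(X,Y)=\alpha g(X,\varphi Y)$, the para $\alpha$-contact condition. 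For normality I would use the torsion-free expansion
\begin{align*}
[\varphi,\varphi](X,Y)=(\widehat{\nabla}_{\varphi X}\varphi)(Y)-(\widehat{\nabla}_{\varphi Y}\varphi)(X)-\varphi\big((\widehat{\nabla}_X\varphi)(Y)-(\widehat{\nabla}_Y\varphi)(X)\big),
\end{align*}
substitute the para $\alpha$-Sasakian expression into each term, and simplify; the $\eta(Y)\varphi X-\eta(X)\varphi Y$ contributions cancel and what survives is exactly $-2\alpha g(\varphi X,Y)\,\xi=2\,d\eta(X,Y)\,\xi$, i.e. $[\varphi,\varphi]-2\,d\eta\otimes\xi=0$. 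This direction is essentially bookkeeping.

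The converse (normal and para $\alpha$-contact $\Rightarrow$ para $\alpha$-Sasakian) is the genuine obstacle, because normality constrains only the fully antisymmetric (Nijenhuis) combination of $\widehat{\nabla}\varphi$ and para $\alpha$-contact constrains only $d\eta$, the antisymmetrization of $\widehat{\nabla}\eta$, whereas the conclusion is a statement about the \emph{whole} tensor $\widehat{\nabla}\varphi$. My plan is to recover the missing symmetric information through the standard polarization identity for almost paracontact metric manifolds: an expression for $2g\big((\widehat{\nabla}_X\varphi)Y,Z\big)$ in terms of $d\Phi$ (with $\Phi(X,Y)=g(X,\varphi Y)$), of the normality tensor $N^{(1)}=[\varphi,\varphi]-2\,d\eta\otimes\xi$, of the auxiliary tensor $N^{(2)}(Y,Z)=(\mathcal{L}_{\varphi Y}\eta)(Z)-(\mathcal{L}_{\varphi Z}\eta)(Y)$, and of $d\eta$-terms carrying the factors $\eta(Y),\eta(Z)$. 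That identity is proved once and for all by cyclic manipulation of $3\,d\Phi$ with the torsion-free connection. Into it I would substitute $N^{(1)}=0$ (normality), the vanishing of $N^{(2)}$ (which follows from $N^{(1)}=0$ together with metric compatibility), and $d\eta=\alpha\Phi$ (para $\alpha$-contact), the last also controlling $d\Phi$ via $0=d(d\eta)=d\alpha\wedge\Phi+\alpha\,d\Phi$. After simplification the right-hand side should collapse to $2\alpha\big(-g(X,Y)\eta(Z)+\eta(Y)g(X,Z)\big)$, which is the para $\alpha$-Sasakian identity paired with $Z$.

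The hardest step is the honest treatment of the $d\Phi$ and $d\alpha$ terms: when $\alpha$ is merely a smooth function, $d\Phi=-\alpha^{-1}\,d\alpha\wedge\Phi$ does not vanish, and one must verify that the stray $d\alpha$-contributions either cancel against the $\eta$-direction terms or are annihilated on the arguments in question. I would handle this by first establishing $\widehat{\nabla}_\xi\xi=0$ and $\widehat{\nabla}_X\xi=-\alpha\varphi X$ — the former is immediate from para $\alpha$-contact since $d\eta(\xi,\cdot)=\alpha\,\eta(\varphi\,\cdot)=0$, while for the latter the symmetric part of $\widehat{\nabla}\xi$ must be shown to vanish (equivalently, $\xi$ is Killing), which is where normality is genuinely used — and then showing $d\alpha=(\xi\alpha)\eta$, so that the $d\alpha\wedge\Phi$ terms drop out on $\DD$. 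A more conceptual alternative would be to pass to the paracomplex cone over $M$ and read normality as integrability of the induced almost paracomplex structure, but the non-constant factor $\alpha$ makes the direct computation above the more transparent route.
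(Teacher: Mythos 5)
You should first be aware that the paper contains no proof of this statement at all: it is quoted verbatim from Olszak's paper \cite{OL} (Theorem \ref{tw::equivalentforparaalfasasakian} carries the citation), so there is nothing in-paper to compare your argument with, and I can only judge it on its own merits. Your forward implication is correct and essentially complete: putting $Y=\xi$ in the para $\alpha$-Sasakian equation and using $\varphi^3=\varphi$, $\eta(\widehat{\nabla}_X\xi)=0$ (legitimate, since (\ref{eq::etaXhXsi}) gives $\eta(X)=g(X,\xi)$) does yield $\widehat{\nabla}_X\xi=-\alpha\varphi X$, from which $d\eta(X,Y)=\alpha g(X,\varphi Y)$ follows with the paper's convention $d\eta(X,Y)=\frac12\bigl(X(\eta(Y))-Y(\eta(X))-\eta([X,Y])\bigr)$, and your torsion-free expansion of $[\varphi,\varphi]$ is the correct one; substituting the Sasakian identity into it does leave exactly $2\alpha g(X,\varphi Y)\xi=2\,d\eta(X,Y)\xi$. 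The skeleton of your converse is also the standard and appropriate one (a paracontact analogue of Blair's polarization formula exists in the literature, e.g.\ in Zamkovoy \cite{18}, and $N^{(2)}=0$ does follow from normality).

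However, the converse has a genuine flaw at precisely the step you flagged as hardest, and the flaw is that your auxiliary claim $d\alpha=(\xi\alpha)\eta$ is \emph{backwards}. If the converse holds, the structure is para $\alpha$-Sasakian, and then $(\widehat{\nabla}_X\Phi)(Y,Z)=g\bigl(Y,(\widehat{\nabla}_X\varphi)Z\bigr)=\alpha\bigl(\eta(Z)g(X,Y)-g(X,Z)\eta(Y)\bigr)$, whose cyclic sum vanishes identically, so $d\Phi=0$ \emph{for any} smooth $\alpha$; consequently $0=d(d\eta)=d\alpha\wedge\Phi$. Evaluating $d\alpha\wedge\Phi$ on $(\xi,Z,W)$ with $Z,W\in\DD$ and using $\Phi(\xi,\cdot)=0$ together with nondegeneracy of $\Phi$ on $\DD$ forces $\xi\alpha=0$, and for $\dim M\geq 5$ evaluating on triples in $\DD$ forces $d\alpha|_{\DD}=0$ as well (wedging with a nondegenerate $2$-form is injective on $1$-forms in dimension $\geq 4$). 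So the constraint any correct proof must establish is that $d\alpha$ annihilates $\xi$ (indeed that $\alpha$ is locally constant when $\dim M\geq 5$) --- the exact opposite of $d\alpha$ being proportional to $\eta$. Concretely, with your ansatz $d\alpha=(\xi\alpha)\eta$ and $\xi\alpha\neq 0$, the stray term $d\Phi(X,\varphi Y,\varphi Z)=-\alpha^{-1}(\xi\alpha)\,\eta(X)\,\Phi(\varphi Y,\varphi Z)$ does \emph{not} drop out on the $\eta(X)$-direction, and the polarization identity will not collapse to the para $\alpha$-Sasakian form; your plan as written fails there. To close the gap you would need to derive $d\alpha\wedge\Phi=0$ (equivalently $\xi\alpha=0$ in dimension $3$) from normality plus the para $\alpha$-contact condition \emph{before} invoking the polarization identity, or else follow Olszak's actual route in \cite{OL}.
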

\par Let $f\colon M\rightarrow \R^{2n+2}$ be a nondegenerate affine
hypersurface with a $\widetilde{J}$-tangent transversal vector field $C$. Then
we can define a vector field $\xi$, a 1-form $\eta$ and a tensor field
$\varphi$ of type (1,1) as follows:
\begin{align}
&\xi:=\widetilde{J}C;\\
\label{etanaD::eq::0}&\eta|_\DD=0 \text{ and } \eta(\xi)=1; \\
&\varphi|_\DD=\widetilde{J}|_\DD \text{ and } \varphi(\xi)=0.
\end{align}
It is easy to see that $(\varphi,\xi,\eta)$ is an almost paracontact
structure on $M$. This structure will be called the \emph{induced almost
paracontact structure}.
\par We shall now prove
\begin{tw}\label{tw::Wzory}
Let $f\colon M\rightarrow \mathbb{R}^{2n+2}$ be an affine hypersurface with a $\widetilde{J}$-tangent transversal vector field $C$.
If $(\varphi,\xi,\eta)$ is an induced almost paracontact structure on $M$
then the following equations hold:
\begin{align}
\label{Wzory::eq::1}&\eta(\nabla_XY)=h(X,\varphi Y)+X(\eta(Y))+\eta(Y)\tau(X),\\
\label{Wzory::eq::2}&\varphi(\nabla_XY)=\nabla_X\varphi Y-\eta(Y)SX-h(X,Y)\xi,\\
\label{Wzory::eq::3}&\eta([X,Y])=h(X,\varphi Y)-h(Y,\varphi X)+X(\eta(Y))-Y(\eta(X))\\
\nonumber &\qquad\qquad\quad+\eta(Y)\tau(X)-\eta(X)\tau(Y),\\
\label{Wzory::eq::4}&\varphi([X,Y])=\nabla_X\varphi Y-\nabla_Y\varphi X+\eta(X)SY-\eta(Y)SX,\\
\label{Wzory::eq::5}&\eta(\nabla_X\xi)=\tau(X),\\
\label{Wzory::eq::6}&\eta(SX)=-h(X,\xi)
\end{align}
for every $X,Y\in \X(M)$.
\end{tw}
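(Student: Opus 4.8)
The plan is to reduce all six formulas to a single structural observation: since $\widetilde{J}$ is a \emph{constant} linear endomorphism of $\R^{2n+2}$, it is parallel for the flat connection $\D$, i.e. $\D_X(\widetilde{J}W)=\widetilde{J}(\D_X W)$ for every vector field $W$. The only algebraic input I would prepare beforehand is the pointwise decomposition
\[
\widetilde{J}Y=\varphi Y+\eta(Y)C \qquad (Y\in\XM).
\]
This comes from writing $Y=Y_0+\eta(Y)\xi$ with $Y_0\in\DD=\ker\eta$: then $\widetilde{J}Y_0=\varphi Y_0=\varphi Y$ because $\varphi|_\DD=\widetilde{J}|_\DD$ and $\varphi\xi=0$, while $\widetilde{J}\xi=\widetilde{J}^2C=C$ since $\widetilde{J}^2=\id$ and $\xi=\widetilde{J}C$. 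Applying the same identity to the tangent vector $SX$ records $\widetilde{J}(SX)=\varphi(SX)+\eta(SX)C$, which I will need for the last formula.

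Next I would differentiate the decomposition. Applying $\D_X$ to $\widetilde{J}Y=\varphi Y+\eta(Y)C$, the left side becomes, by parallelism, Gauss, and the decomposition applied to $\nabla_XY$,
\[
\widetilde{J}(\D_XY)=\varphi(\nabla_XY)+\eta(\nabla_XY)C+h(X,Y)\xi,
\]
while the right side, by Gauss on $\varphi Y$ and Weingarten on $C$, becomes
\[
\D_X(\varphi Y+\eta(Y)C)=\nabla_X\varphi Y-\eta(Y)SX+\bigl(h(X,\varphi Y)+X(\eta(Y))+\eta(Y)\tau(X)\bigr)C.
\]
Because $TM=\DD\oplus\langle\xi\rangle$ and $C$ is transversal, the $C$-components may be compared independently of the tangential ones: the former gives (\ref{Wzory::eq::1}) and the latter gives (\ref{Wzory::eq::2}).

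The remaining formulas then fall out mechanically. Since $\nabla$ is torsion-free, $[X,Y]=\nabla_XY-\nabla_YX$; applying $\eta$ and substituting (\ref{Wzory::eq::1}) for both orderings yields (\ref{Wzory::eq::3}), and applying $\varphi$ together with the symmetry of $h$ and (\ref{Wzory::eq::2}) yields (\ref{Wzory::eq::4}). Formula (\ref{Wzory::eq::5}) is just (\ref{Wzory::eq::1}) specialized to $Y=\xi$, using $\varphi\xi=0$ and $\eta(\xi)=1$. For (\ref{Wzory::eq::6}) I would differentiate $\xi=\widetilde{J}C$ directly: $\D_X\xi=\widetilde{J}(\D_XC)=-\widetilde{J}(SX)+\tau(X)\xi=-\varphi(SX)-\eta(SX)C+\tau(X)\xi$, and comparing the $C$-component with the one coming from Gauss, $\D_X\xi=\nabla_X\xi+h(X,\xi)C$, gives $\eta(SX)=-h(X,\xi)$.

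I do not expect a genuine obstacle here; the proof is essentially a single differentiation followed by careful bookkeeping of the tangential/transversal splitting. The only place demanding attention is keeping the $\DD$-, $\xi$-, and $C$-components separated correctly and invoking the decomposition identity on each tangent vector that appears ($\nabla_XY$ and $SX$). The one substantive idea is the observation that $\widetilde{J}$ is $\D$-parallel together with the decomposition $\widetilde{J}Y=\varphi Y+\eta(Y)C$; once these are in place, all six equations follow uniformly.
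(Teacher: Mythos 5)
Your proposal is correct and follows essentially the same route as the paper: both rest on the $\D$-parallelism of $\widetilde{J}$, the decomposition $\widetilde{J}Y=\varphi Y+\eta(Y)C$, and a comparison of tangential and transversal parts of $\D_X\widetilde{J}Y=\widetilde{J}(\D_XY)$, with (\ref{Wzory::eq::3})--(\ref{Wzory::eq::6}) derived as direct consequences. Your only deviation is deriving (\ref{Wzory::eq::6}) by differentiating $\xi=\widetilde{J}C$ directly rather than specializing (\ref{Wzory::eq::2}) to $Y=\xi$ and applying $\eta$, which is a trivially equivalent computation.
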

\begin{proof}
For every $X\in TM$ we have
$$
\widetilde{J}X=\varphi X+\eta(X)C.
$$
We also have
\begin{align*}
\widetilde{J}(D_XY)&=\widetilde{J}(\nabla_XY+h(X,Y)C)=\widetilde{J}(\nabla_XY)+h(X,Y)\widetilde{J}C\\
&=\varphi(\nabla_XY)+\eta(\nabla_XY)C+h(X,Y)\xi
\end{align*}
and
\begin{align*}
D_X\widetilde{J}Y&=D_X(\varphi Y+\eta(Y)C)=D_X\varphi
Y+X(\eta(Y))C+\eta(Y)D_XC\\
&=\nabla_X\varphi Y+h(X,\varphi
Y)C+X(\eta(Y))C+\eta(Y)(-SX+\tau(X)C)\\
&=\nabla_X\varphi Y-\eta(Y)SX+(h(X,\varphi
Y)+X(\eta(Y))+\eta(Y)\tau(X))C.
\end{align*}
Since $\D_X\widetilde{J}Y=\widetilde{J}(D_XY)$, comparing transversal and tangent parts, we obtain (\ref{Wzory::eq::1}) and (\ref{Wzory::eq::2}), respectively. Equations
(\ref{Wzory::eq::3})---(\ref{Wzory::eq::6}) follow directly from (\ref{Wzory::eq::1}) and (\ref{Wzory::eq::2}).
\end{proof}
From the above theorem we immediately get
\begin{wn}\label{wn::Wzory} For every $Z,W\in \DD$ we have
\begin{align}
\label{WniosekWzory::eq::1}&\eta(\nabla_ZW)=h(Z,\varphi W),\\
\label{WniosekWzory::eq::2}&\eta(\nabla_\xi Z)=h(\xi,\varphi Z),\\
\label{WniosekWzory::eq::3}&\varphi(\nabla_ZW)=\nabla_Z\varphi W-h(Z,W)\xi,\\
\label{WniosekWzory::eq::4}&\eta([Z,W])=h(Z,\varphi W)-h(W,\varphi Z),\\
\label{WniosekWzory::eq::5}&\eta([Z,\xi])=-h(\xi,\varphi Z)+\tau(Z).
\end{align}
\end{wn}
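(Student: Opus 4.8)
The plan is to read off each of the five identities by substituting the special vector fields $Z,W\in\DD$ and $\xi$ into the general formulas of Theorem \ref{tw::Wzory} and then discarding the terms that collapse under the structural relations of the induced almost paracontact structure, namely $\eta|_\DD=0$, $\eta(\xi)=1$, and $\varphi\xi=0$. The single observation that drives everything is that for a fixed $\DD$-field $Y$ the function $\eta(Y)$ is identically zero on $M$; consequently every coefficient of the form $\eta(Y)$ and, crucially, every directional-derivative term $X(\eta(Y))$ appearing in the formulas of Theorem \ref{tw::Wzory} vanishes. I would stress that this is not the same as saying $\eta$ annihilates a covariant derivative or a bracket: the quantities $\eta(\nabla_ZW)$ and $\eta([Z,W])$ are genuinely nontrivial precisely because $\nabla_ZW$ and $[Z,W]$ need not remain in $\DD$, and it is exactly these that the corollary computes.

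For the first four identities I would proceed as follows. Taking $X=Z$, $Y=W$ in (\ref{Wzory::eq::1}) and using $\eta(W)\equiv0$ (so both $Z(\eta(W))=0$ and $\eta(W)\tau(Z)=0$) yields (\ref{WniosekWzory::eq::1}). Taking instead $X=\xi$, $Y=Z$ in the same formula (\ref{Wzory::eq::1}), with $\eta(Z)\equiv0$ killing the last two terms, gives (\ref{WniosekWzory::eq::2}). For (\ref{WniosekWzory::eq::3}) I substitute $X=Z$, $Y=W$ into (\ref{Wzory::eq::2}) and drop the term $\eta(W)SZ=0$. Finally, putting $X=Z$, $Y=W$ into (\ref{Wzory::eq::3}) and using that all four of $Z(\eta(W))$, $W(\eta(Z))$, $\eta(W)\tau(Z)$, $\eta(Z)\tau(W)$ vanish produces (\ref{WniosekWzory::eq::4}).

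The last identity (\ref{WniosekWzory::eq::5}) is the only one that mixes a $\DD$-field with $\xi$, so it requires the two remaining structural facts rather than just $\eta|_\DD=0$. I would obtain it by setting $X=Z$, $Y=\xi$ in (\ref{Wzory::eq::3}). Here the term $h(Z,\varphi\xi)$ drops out because $\varphi\xi=0$; the derivative term $Z(\eta(\xi))$ vanishes because $\eta(\xi)=1$ is a constant function; the term $\xi(\eta(Z))$ vanishes since $\eta(Z)\equiv0$; the coefficient $\eta(\xi)\tau(Z)$ simplifies to $\tau(Z)$; and $\eta(Z)\tau(\xi)=0$. What remains is exactly $\eta([Z,\xi])=-h(\xi,\varphi Z)+\tau(Z)$.

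There is no serious obstacle in this proof, as it is purely a matter of specialization; the only point demanding care is the bookkeeping distinction just mentioned, between the \emph{coefficient/derivative} terms $\eta(Y)$ and $X(\eta(Y))$, which vanish identically for $\DD$-fields, and the \emph{output} quantities $\eta(\nabla_ZW)$ and $\eta([Z,W])$, which must be kept. I would therefore present the argument as a short, uniform substitution, explicitly recording in each case which terms are annihilated and by which of the three relations $\eta|_\DD=0$, $\eta(\xi)=1$, $\varphi\xi=0$.
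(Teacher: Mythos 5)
Your proof is correct and follows exactly the paper's route: the paper states this corollary as an immediate consequence of Theorem \ref{tw::Wzory}, obtained by precisely the specializations you perform (substituting $\DD$-fields and $\xi$ into (\ref{Wzory::eq::1})--(\ref{Wzory::eq::3}) and discarding terms via $\eta|_\DD=0$, $\eta(\xi)=1$, $\varphi\xi=0$). Your explicit bookkeeping, including the remark that $X(\eta(Y))=0$ for $\DD$-fields $Y$ while $\eta(\nabla_ZW)$ and $\eta([Z,W])$ remain genuinely nontrivial, is a faithful expansion of the paper's one-line derivation.
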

Almost paracontact normal structures can be characterized as follows
\begin{stw}\label{tw::WarunekNaNormalnosc}
The induced almost paracontact structure $(\varphi,\xi,\eta)$ is normal if and only if
$$
S\varphi Z-\varphi SZ+\tau(Z)\xi=0\qquad \text{for every $Z\in
\mathcal{D}$}.
$$
\end{stw}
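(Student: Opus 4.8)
The plan is to express the normality tensor $N(X,Y):=[\varphi,\varphi](X,Y)-2d\eta(X,Y)\xi$ entirely in terms of $S$, $\varphi$ and $\tau$, and then read off when it vanishes on the splitting $TM=\DD\oplus\mathbb{R}\xi$. Since both $[\varphi,\varphi]$ and $d\eta\otimes\xi$ are tensorial, it is enough to control $N$ on arguments adapted to this decomposition.

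The first step is to record the covariant derivative of $\varphi$. Rewriting (\ref{Wzory::eq::2}) as $\nabla_X\varphi Y-\varphi(\nabla_XY)=\eta(Y)SX+h(X,Y)\xi$ gives at once
\begin{equation*}
(\nabla_X\varphi)Y=\eta(Y)SX+h(X,Y)\xi.
\end{equation*}
Because the induced connection $\nabla$ is torsion-free, the Nijenhuis tensor admits the standard expression
\begin{equation*}
[\varphi,\varphi](X,Y)=(\nabla_{\varphi X}\varphi)Y-(\nabla_{\varphi Y}\varphi)X+\varphi(\nabla_Y\varphi)X-\varphi(\nabla_X\varphi)Y,
\end{equation*}
into which I would substitute the formula above, using $\varphi\xi=0$ and the symmetry of $h$. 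Collecting terms yields
\begin{equation*}
[\varphi,\varphi](X,Y)=\eta(Y)(S\varphi X-\varphi SX)-\eta(X)(S\varphi Y-\varphi SY)+\big(h(\varphi X,Y)-h(\varphi Y,X)\big)\xi.
\end{equation*}

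Next I would handle the $2d\eta$ term. From (\ref{Wzory::eq::3}) together with $2d\eta(X,Y)=X(\eta(Y))-Y(\eta(X))-\eta([X,Y])$ one computes
\begin{equation*}
2d\eta(X,Y)=h(\varphi X,Y)-h(\varphi Y,X)+\eta(X)\tau(Y)-\eta(Y)\tau(X).
\end{equation*}
The crux of the computation is that the two $h$-terms here are exactly the $\xi$-component of $[\varphi,\varphi]$, so they cancel when forming $N$, leaving
\begin{equation*}
N(X,Y)=\eta(Y)\big(S\varphi X-\varphi SX+\tau(X)\xi\big)-\eta(X)\big(S\varphi Y-\varphi SY+\tau(Y)\xi\big).
\end{equation*}

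Finally, setting $P(X):=S\varphi X-\varphi SX+\tau(X)\xi$, so that $N(X,Y)=\eta(Y)P(X)-\eta(X)P(Y)$, I would conclude by testing on $TM=\DD\oplus\mathbb{R}\xi$: for $X,Y\in\DD$ both values of $\eta$ vanish and $N=0$ automatically, while the contribution of $P(\xi)$ cancels in every mixed evaluation. Hence $N\equiv0$ if and only if $N(Z,\xi)=P(Z)=0$ for all $Z\in\DD$, which is precisely the stated condition $S\varphi Z-\varphi SZ+\tau(Z)\xi=0$. The only real obstacle I anticipate is bookkeeping: pinning down the sign conventions in the torsion-free Nijenhuis identity and in $d\eta$ so that the $h$-terms cancel exactly; once those match, the rest is direct substitution from Theorem \ref{tw::Wzory}.
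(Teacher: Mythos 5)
Your proof is correct, and it is essentially the paper's argument: the paper also expands the normality tensor $[\varphi,\varphi]-2d\eta\otimes\xi$ using the identities of Theorem \ref{tw::Wzory} together with $d\eta(X,Y)=\frac{1}{2}\left(X(\eta(Y))-Y(\eta(X))-\eta([X,Y])\right)$, observes the cancellation of the $h$-terms, and reads off the condition on $\DD$. The only (cosmetic) difference is that you organize the computation through $(\nabla_X\varphi)Y=\eta(Y)SX+h(X,Y)\xi$ and the torsion-free covariant form of the Nijenhuis tensor, whereas the paper works directly with the bracket formulas (\ref{Wzory::eq::3}) and (\ref{Wzory::eq::4}).
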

\begin{proof}
It is an immediate consequence of (\ref{Def::eq::normality}), the identity
$$
d\eta (X,Y)=\frac{1}{2}(X(\eta(Y))-Y(\eta(X))-\eta([X,Y]))
$$ and the formulas (\ref{Wzory::eq::3}) and (\ref{Wzory::eq::4}).
\end{proof}
\section{Main results}
In this section we always assume that $(\varphi,\xi,\eta)$ is an induced almost paracontact structure.
In order to prove the main theorem of this section we need the following two lemmas:

\begin{lm}\label{lm::wzory::EST}
If $(\varphi ,\xi ,\eta ,h)$ is an almost paracontact metric structure then
\begin{align}
\label{WniosekWzory::eq::6}&\eta (X)=h(X,\xi),\text{ for every } X\in TM,\\
\label{WniosekWzory::eq::7}&S(\DD)\subset \DD,\\
\label{WniosekWzory::eq::8}&S\xi =-\xi +Z_0, \text{ where } Z_0\in\DD \\
\label{WniosekWzory::eq::9}&\tau (Z)=-h(Z,\varphi Z_0) \text{ for every } Z\in\DD .
\end{align}
\end{lm}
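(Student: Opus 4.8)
The first three identities fall out almost immediately from the metric hypothesis together with Theorem~\ref{tw::Wzory}. For (\ref{WniosekWzory::eq::6}) I would simply put $Y=\xi$ in the compatibility condition (\ref{Def::eq::metric}) (with $g=h$): since $\varphi\xi=0$ and $\eta(\xi)=1$, the left-hand side vanishes and we are left with $0=-h(X,\xi)+\eta(X)$, that is $\eta(X)=h(X,\xi)$. For (\ref{WniosekWzory::eq::7}) I would read off from (\ref{Wzory::eq::6}) that $\eta(SZ)=-h(Z,\xi)$ and then invoke the identity just proved to get $\eta(SZ)=-\eta(Z)=0$ for $Z\in\DD$; hence $SZ\in\ker\eta=\DD$. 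For (\ref{WniosekWzory::eq::8}) I would decompose $S\xi=a\xi+Z_0$ with $Z_0\in\DD$ and determine the coefficient by applying $\eta$: using (\ref{Wzory::eq::6}) and $h(\xi,\xi)=\eta(\xi)=1$ gives $a=\eta(S\xi)=-h(\xi,\xi)=-1$, so $S\xi=-\xi+Z_0$.

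The substantive step is (\ref{WniosekWzory::eq::9}), and the plan is to combine an auxiliary formula for $\nabla\xi$ with the total symmetry of the cubic form $Q$. First I would derive
\[
\nabla_X\xi=-\varphi(SX)+\tau(X)\xi
\]
by applying $\varphi$ to (\ref{Wzory::eq::2}) taken at $Y=\xi$, then applying $\varphi$ once more and using $\varphi^2X=X-\eta(X)\xi$ together with (\ref{Wzory::eq::5}). Specializing to $X=\xi$ and inserting (\ref{WniosekWzory::eq::8}) with $\varphi\xi=0$ yields $\nabla_\xi\xi=-\varphi Z_0+\tau(\xi)\xi$.

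It then remains to pin down $\tau(Z)$ via $Q$. I would evaluate $Q$, defined in (\ref{eq::CubicForm}), on the triples $(Z,\xi,\xi)$ and $(\xi,Z,\xi)$ for $Z\in\DD$. Using $h(\xi,\xi)=1$ and $(\nabla_Zh)(\xi,\xi)=-2\,\eta(\nabla_Z\xi)=-2\tau(Z)$ one obtains $Q(Z,\xi,\xi)=-\tau(Z)$; using $h(Z,\xi)=\eta(Z)=0$ and $\eta(\nabla_\xi Z)=h(\xi,\varphi Z)=0$ (from (\ref{WniosekWzory::eq::2})) one obtains $Q(\xi,Z,\xi)=-h(Z,\nabla_\xi\xi)$. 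Since $Q$ is symmetric these are equal, so $\tau(Z)=h(Z,\nabla_\xi\xi)$; substituting the expression for $\nabla_\xi\xi$ and discarding $\tau(\xi)h(Z,\xi)=\tau(\xi)\eta(Z)=0$ gives exactly $\tau(Z)=-h(Z,\varphi Z_0)$.

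The routine parts are the three algebraic identities; the one place that needs care is (\ref{WniosekWzory::eq::9}), where the main obstacle is ensuring the derivative terms in $(\nabla h)(\cdot,\cdot)$ genuinely drop out. This forces me to treat $\eta(Z)=h(Z,\xi)=0$ as a \emph{field} identity (valid because $Z$ is a $\DD$-field and $\DD=\ker\eta$) rather than a pointwise one, so that the directional derivatives such as $\xi(h(Z,\xi))$ vanish. Everything else is bookkeeping with the formulas of Theorem~\ref{tw::Wzory} and Corollary~\ref{wn::Wzory}.
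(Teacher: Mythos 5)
Your proof is correct, and while the routine parts (\ref{WniosekWzory::eq::6})--(\ref{WniosekWzory::eq::8}) coincide with the paper's (both read them off from (\ref{Def::eq::metric}) with $Y=\xi$ and from (\ref{Wzory::eq::6})), your treatment of (\ref{WniosekWzory::eq::9}) takes a genuinely different route. The paper applies the Codazzi equation for the shape operator $S$ to the pair $(Z,\xi)$, projects everything by $\eta$, and uses $\nabla_\xi Z\in\DD$ (a consequence of (\ref{WniosekWzory::eq::2}) and (\ref{WniosekWzory::eq::6})) to kill the right-hand side, arriving at $\tau(Z)=-\eta(\nabla_Z Z_0)+\eta(\nabla_Z\xi)+\eta(S(\nabla_Z\xi))=-h(Z,\varphi Z_0)$. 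You instead exploit the Codazzi equation for $h$, packaged as the total symmetry of the cubic form $Q$: comparing $Q(Z,\xi,\xi)=-\tau(Z)$ with $Q(\xi,Z,\xi)=-h(Z,\nabla_\xi\xi)$ and substituting $\nabla_\xi\xi=-\varphi Z_0+\tau(\xi)\xi$, which you obtain from the clean auxiliary identity $\nabla_X\xi=-\varphi(SX)+\tau(X)\xi$ (itself a correct consequence of (\ref{Wzory::eq::2}) at $Y=\xi$, $\varphi^2 X=X-\eta(X)\xi$ and (\ref{Wzory::eq::5}), with no metric hypothesis needed). All your intermediate computations check out: $h(\xi,\xi)=1$, $h(\nabla_Z\xi,\xi)=\eta(\nabla_Z\xi)=\tau(Z)$, and $h(\nabla_\xi Z,\xi)=\eta(\nabla_\xi Z)=h(\xi,\varphi Z)=\eta(\varphi Z)=0$; your remark that $\eta(Z)=h(Z,\xi)=0$ must be used as a field identity (legitimate, since $\tau(Z)$ and $h(Z,\varphi Z_0)$ are tensorial in $Z$ and $\DD$ is a smooth distribution, so any $Z\in\DD_x$ extends to a local $\DD$-field) is exactly the right point of care. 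What each approach buys: yours is more computational but yields the explicit formula for $\nabla_X\xi$ as a reusable byproduct (the paper re-derives the special case $\varphi(\nabla_W\xi)=-SW$ inside Lemma \ref{lm::FormaKubiczna}), whereas the paper's Codazzi-for-$S$ argument stays entirely at the level of $\eta$-projections and never needs $\nabla_\xi\xi$; the two are on an equal logical footing, each invoking one of the two Codazzi equations of Theorem \ref{tw::FundamentalEquations}.
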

\begin{proof}
Properties (\ref{WniosekWzory::eq::6}), (\ref{WniosekWzory::eq::7}) and (\ref{WniosekWzory::eq::8}) are immediate consequence of (\ref{Def::eq::metric}) and (\ref{Wzory::eq::6}). The Codazzi equation for $S$ implies that
$$
\nabla _XS\xi-S(\nabla_X\xi)-\tau (X) S\xi=\nabla_{\xi}SX-S(\nabla_{\xi}X)-\tau(\xi)SX.
$$
Formula (\ref{WniosekWzory::eq::2}) and the fact that $(\varphi ,\xi ,\eta ,h)$ is metric structure imply that $\nabla_{\xi}Z\in\DD$ for every $Z\in\DD$.
By (\ref{WniosekWzory::eq::7}) and (\ref{WniosekWzory::eq::8}) we obtain
\begin{align}\label{lm::eq::tauZ}
\tau (Z)=-\eta (\nabla _ZZ_0)+\eta (\nabla _Z\xi)+\eta (S(\nabla _Z\xi))
\end{align} for every $Z\in \DD$. Now, using (\ref{Wzory::eq::1}), (\ref{Wzory::eq::6}), (\ref{WniosekWzory::eq::6}) and the fact that $(\varphi ,\xi,\eta,h)$ is metric structure we get
$$
\eta (\nabla _ZZ_0)=h(Z,\varphi Z_0), \quad \eta (S(\nabla _Z\xi))=-\eta (\nabla_Z\xi)
$$ for every $Z\in\DD$. Hence, equation (\ref{lm::eq::tauZ}) can be rewritten as
$$
\tau (Z)=-h(Z,\varphi Z_0),
$$ which proves (\ref{WniosekWzory::eq::9}).
\end{proof}

\begin{lm}\label{lm::FormaKubiczna} If $(\varphi,\xi,\eta,h)$ is
an almost paracontact metric structure then
\begin{align}
\label{FormaKubiczna::eq::1}&Q(X,W,Z)=-Q(X,\varphi W,\varphi Z),\\
\label{FormaKubiczna::eq::2}&Q(W_1,W_2,W_3)=0,\\
\label{FormaKubiczna::eq::3}&Q(\xi,W,W)=-h(SW,\varphi W)=h(S\varphi W,W)
\end{align}
for every $X\in \X(M)$ and  $W,W_1,W_2,W_3,Z\in\DD$.
\end{lm}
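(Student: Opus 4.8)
The plan is to derive all three identities straight from the definition (\ref{eq::CubicForm}) of the cubic form, feeding in the structural equations of Theorem~\ref{tw::Wzory} and its Corollary~\ref{wn::Wzory} and using the metric compatibility (\ref{Def::eq::metric}) (with $g=h$) throughout. The one algebraic fact I would isolate first is that the bilinear form $(U,V)\mapsto h(U,\varphi V)$ is \emph{antisymmetric} on $\DD$. Indeed, by (\ref{Def::eq::metric}) one has $h(\varphi U,\varphi V)=-h(U,V)$ for $U,V\in\DD$ (because $\eta|_\DD=0$); replacing $V$ by $\varphi V$ and using $\varphi^2=\id$ on $\DD$ gives $h(\varphi U,V)=-h(U,\varphi V)$, and symmetry of $h$ then yields $h(U,\varphi V)=-h(V,\varphi U)$. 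In particular $h(W,\varphi W)=0$ for every $W\in\DD$. I will use this repeatedly.

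For (\ref{FormaKubiczna::eq::1}) I would start from (\ref{eq::CubicForm}). Since $h(\varphi W,\varphi Z)=-h(W,Z)$ for $W,Z\in\DD$, the two $\tau$-terms in $Q(X,W,Z)$ and $Q(X,\varphi W,\varphi Z)$ cancel, so it remains to show $(\nabla_X h)(W,Z)+(\nabla_X h)(\varphi W,\varphi Z)=0$. Expanding both covariant derivatives, the identity $X(h(\varphi W,\varphi Z))=-X(h(W,Z))$ removes the two $X$-derivative terms, leaving four terms in $\nabla_X W,\nabla_X Z,\nabla_X\varphi W,\nabla_X\varphi Z$. I would then substitute (\ref{Wzory::eq::2}), which for $W\in\DD$ reads $\nabla_X\varphi W=\varphi(\nabla_X W)+h(X,W)\xi$; pairing with $\varphi Z$ kills the $\xi$-term because $h(\xi,\varphi Z)=\eta(\varphi Z)=0$, and metric compatibility turns $h(\varphi(\nabla_X W),\varphi Z)$ into $-h(\nabla_X W,Z)$. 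The four terms then cancel in pairs.

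For (\ref{FormaKubiczna::eq::2}) the subtle point (and the step I expect to be the main obstacle) is that a single use of (\ref{FormaKubiczna::eq::1}) only relates $Q$ to $\pm Q$, never directly to $0$; the vanishing comes from combining (\ref{FormaKubiczna::eq::1}) with the total symmetry of $Q$. Since $Q$ is symmetric and $W_1,W_2,W_3\in\DD$, (\ref{FormaKubiczna::eq::1}) says that inserting $\varphi$ into \emph{any} two of the three slots reverses the sign of $Q$. Inserting $\varphi$ into the pair $\{1,3\}$ and then into $\{1,2\}$ applies $\varphi^2=\id$ in the first slot and is therefore the same as inserting it into $\{2,3\}$: reaching $Q(W_1,\varphi W_2,\varphi W_3)$ by the two successive flips gives $+Q(W_1,W_2,W_3)$, whereas reaching it by the single flip in $\{2,3\}$ gives $-Q(W_1,W_2,W_3)$; hence $Q(W_1,W_2,W_3)=0$. (Equivalently, one may decompose $\DD=\DD^{+}\oplus\DD^{-}$ into the $\pm1$-eigenspaces of $\varphi$ and check the claim on eigenvectors, where (\ref{FormaKubiczna::eq::1}) and symmetry dispose of every case.)

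For (\ref{FormaKubiczna::eq::3}) I would compute $Q(\xi,W,W)$ via symmetry as $Q(W,W,\xi)=(\nabla_W h)(W,\xi)$, the $\tau$-term vanishing since $h(W,\xi)=\eta(W)=0$. Extending $W$ as a $\DD$-field, $(\nabla_W h)(W,\xi)=-h(\nabla_W W,\xi)-h(W,\nabla_W\xi)$; the first term is $-\eta(\nabla_W W)=-h(W,\varphi W)=0$ by (\ref{WniosekWzory::eq::1}) and the antisymmetry above. For the second I would first record that (\ref{Wzory::eq::2}) with $Y=\xi$, together with (\ref{Wzory::eq::5}) and $\varphi^2=\id-\eta\otimes\xi$, gives $\nabla_X\xi=-\varphi SX+\tau(X)\xi$, whence $h(W,\nabla_W\xi)=-h(W,\varphi SW)$. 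This yields $Q(\xi,W,W)=h(W,\varphi SW)=-h(SW,\varphi W)$, the first equality. The second equality is \emph{not} a consequence of the antisymmetry alone (that would force $h(SW,W)=0$ on $\DD^{+}$, which is itself a consequence of the Codazzi structure rather than an identity); instead I would apply (\ref{FormaKubiczna::eq::1}) with $X=\xi$ to get $Q(\xi,W,W)=-Q(\xi,\varphi W,\varphi W)$, and then substitute $W\mapsto\varphi W$ in the first equality, giving $Q(\xi,\varphi W,\varphi W)=-h(S\varphi W,W)$, so that $Q(\xi,W,W)=h(S\varphi W,W)$.
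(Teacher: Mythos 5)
Your proof is correct, and for identities (\ref{FormaKubiczna::eq::1}) and (\ref{FormaKubiczna::eq::3}) it is essentially the paper's argument: the same expansion of the cubic form via (\ref{Wzory::eq::2}) and metric compatibility for the first, and for the third the same reduction $Q(\xi,W,W)=-h(\nabla_W\xi,W)$ followed by $\nabla_W\xi=-\varphi SW+\tau(W)\xi$ and an application of (\ref{FormaKubiczna::eq::1}) with $X=\xi$ for the second equality --- your observation that this last equality cannot follow from antisymmetry alone is precisely how the paper proceeds as well. Where you genuinely diverge is (\ref{FormaKubiczna::eq::2}). The paper first establishes the diagonal vanishing $Q(W,W,W)=0$, and the key step $Q(\varphi W,\varphi W,W)=0$ is obtained by a direct computation from the definition (\ref{eq::CubicForm}), using $h(\varphi W,W)=0$ and $\nabla_{\varphi W}W=\varphi(\nabla_{\varphi W}\varphi W)-h(W,W)\xi$; it then polarizes using the total symmetry of $Q$. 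Your ``two flips versus one flip'' argument is purely formal: it uses only (\ref{FormaKubiczna::eq::1}), the symmetry of $Q$, $\varphi(\DD)\subset\DD$ and $\varphi^2=\id$ on $\DD$, and it produces $Q(W_1,W_2,W_3)=-Q(W_1,W_2,W_3)$ for arbitrary triples in one stroke, with no further covariant-derivative computation and no polarization step. This is a shorter and somewhat more robust route (your eigendistribution variant, checking signs on $\DD^{+}\oplus\DD^{-}$, makes the same point), whereas the paper's computation is more pedestrian but self-contained at the level of the structure equations.

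One detail worth making explicit: in your proof of (\ref{FormaKubiczna::eq::3}), the step $h(W,\varphi SW)=-h(SW,\varphi W)$ applies your antisymmetry of $(U,V)\mapsto h(U,\varphi V)$ with $U=SW$, but you only established that antisymmetry for arguments in $\DD$. So either invoke $S(\DD)\subset\DD$, which is formula (\ref{WniosekWzory::eq::7}) of Lemma \ref{lm::wzory::EST} and is available at this point, or note that the metric condition (\ref{Def::eq::metric}) in fact yields $h(\varphi X,W)=-h(X,\varphi W)$ for every $X\in\X(M)$ and $W\in\DD$, since $\eta\circ\varphi=0$. This is not a genuine gap --- the paper performs the very same step without comment --- but your write-up should close it with one of these two references.
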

\begin{proof}
Let $X\in \X(M)$ and $W,Z\in\DD$. Then by (\ref{eq::CubicForm}) and (\ref{Def::eq::metric}) we have
\begin{align*}
Q(X,\varphi W,\varphi Z)&=X(h(\varphi W,\varphi
Z))-h(\nabla_X\varphi W,\varphi Z)-h(\varphi W,\nabla_X\varphi
Z)\\&\hspace{12pt}+\tau(X)h(\varphi
W,\varphi Z)\\
&=-X(h(W,Z))-h(\nabla_X\varphi W,\varphi Z)-h(\varphi
W,\nabla_X\varphi Z)\\&\hspace{12pt}-\tau(X)h(W,Z).
\end{align*}
By Theorem \ref{tw::Wzory} we see that
$$
\nabla_X\varphi W=\varphi(\nabla_XW)+h(X,W)\xi
$$
and
$$
\nabla_X\varphi Z=\varphi(\nabla_XZ)+h(X,Z)\xi.
$$
 Thus using the above and (\ref{Def::eq::metric}) we get
\begin{align*}
Q(X,\varphi W,\varphi Z)&=-X(h(W,Z))-h(\varphi (\nabla _XW),\varphi Z)-h(X,W)h(\xi,\varphi Z)\\
&\hspace{12pt}-h(\varphi W,\varphi (\nabla _XZ))-h(X,Z)h(\varphi W,\xi)-\tau (X)h(W,Z)\\
&=-X(h(W,Z))+h(\nabla_XW,Z)+h(W,\nabla_XZ)-\tau(X)h(W,Z)\\
&=-Q(X,W,Z),
\end{align*}
which proves (\ref{FormaKubiczna::eq::1}). To prove (\ref{FormaKubiczna::eq::2}) observe
that from (\ref{FormaKubiczna::eq::1}) we have
\begin{align*}
Q(W,W,W)=-Q(W,\varphi W,\varphi W)=-Q(\varphi W,\varphi W,W)=0
\end{align*}
for every  $W\in\DD$, because $h(\varphi W,W)=0$ and $\nabla _{\varphi W}W=\varphi (\nabla _{\varphi W}\varphi W)-h(W,W)\xi$. Since $Q$ is symmetric in all three variables,
the last equation implies that  $Q(W_1,W_2,W_3)=0$ for every $W_1,W_2,W_3\in\DD$.
To prove (\ref{FormaKubiczna::eq::3}) note first that
\begin{align*}
Q(\xi,W,W)=Q(W,\xi,W)=-h(\nabla_W\xi,W)-h(\xi,\nabla_WW),
\end{align*}
since $h(\xi ,W)=0$. Formula (\ref{Wzory::eq::2}) imply that
$$
\varphi(\nabla_W\xi)=-SW.
$$
From (\ref{Def::eq::metric}) and (\ref{WniosekWzory::eq::1}) we get
$$\nabla_WW\in\DD.$$
Now we have
$$Q(\xi,W,W)=-h(\nabla _W\xi,W)=h(\varphi (SW),W)=-h(SW,\varphi W)$$
for every $W\in\DD$.
From (\ref{FormaKubiczna::eq::1}) we obtain
$$Q(\xi,W,W)=-Q(\xi,\varphi W,\varphi W)$$
and in consequence
$$-h(SW,\varphi W)=h(S\varphi W,W),$$
which completes the proof of (\ref{FormaKubiczna::eq::3}).
\end{proof}

We shall now prove
\begin{tw} \label{tw::SiTau=0}
Let $f\colon M\rightarrow \R^{2n+2}$ be a nondegenerate hypersurface with a $\widetilde{J}$-tangent transversal vector field
and let $(\varphi,\xi,\eta)$ be the induced almost paracontact structure on $M$.
If $(\varphi,\xi,\eta,h)$ is the almost paracontact metric structure
then
$$
S=-\id\qquad\text{and}\qquad \tau=0.
$$
\end{tw}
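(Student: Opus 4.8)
The plan is to extract first all the purely algebraic information about $S$ from the cubic-form identities of Lemma~\ref{lm::FormaKubiczna}, and only afterwards to use the differential (Codazzi) equations to remove the remaining ambiguity; the vanishing of $\tau$ will then drop out at the very end.

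First I would compute the cubic form $Q(\xi,W,Z)$ for $W,Z\in\DD$ explicitly. Using the total symmetry of $Q$ to write $Q(\xi,W,Z)=Q(W,Z,\xi)$, expanding with \eqref{eq::CubicForm} and \eqref{WniosekWzory::eq::1}, and using the formula $\nabla_W\xi=\tau(W)\xi-\varphi SW$ (itself read off from \eqref{Wzory::eq::2} and \eqref{Wzory::eq::5}), I expect to obtain $Q(\xi,W,Z)=h(Z,\varphi SW)-h(W,\varphi Z)$. Feeding this into the symmetry $Q(\xi,W,Z)=Q(\xi,Z,W)$ and into \eqref{FormaKubiczna::eq::1}, and simplifying by means of \eqref{Def::eq::metric} together with $\varphi^2|_\DD=\id$, should yield two facts: that $S$ commutes with $\varphi$ on $\DD$, and that the $h$-symmetric part of $S|_\DD$ equals $-\id$, i.e. $h(SW,Z)+h(W,SZ)=-2h(W,Z)$ for all $W,Z\in\DD$.

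Next I would bring in the Ricci equation \eqref{eq::Ricci}, which on $\DD$ reads $h(SW,Z)-h(W,SZ)=-2\,d\tau(W,Z)$ and so identifies the $h$-antisymmetric part of $S|_\DD$ with $d\tau$; combined with the previous step this gives $h(SW,Z)=-h(W,Z)-d\tau(W,Z)$, while \eqref{WniosekWzory::eq::8} retains $S\xi=-\xi+Z_0$ with $Z_0\in\DD$. At this point the theorem is reduced to showing that $d\tau|_\DD=0$ and $Z_0=0$. The decisive, and hardest, step is the Codazzi equation \eqref{eq::CodazziS} for $S$: inserting $X=\xi,\,Y=W\in\DD$ and also $X=W,\,Y=V\in\DD$, expanding the covariant derivatives by means of \eqref{Wzory::eq::2}, \eqref{WniosekWzory::eq::1} and \eqref{WniosekWzory::eq::2}, and then projecting onto $\xi$ via $\eta$ and onto $\DD$ via $h$. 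I anticipate that the $\eta$-components are either automatically satisfied or merely reproduce the identities already obtained, so that the genuinely new constraints live in the $\DD$-valued part. Controlling that part, while keeping track of $Z_0$, of $\tau(Z)=-h(Z,\varphi Z_0)$ from \eqref{WniosekWzory::eq::9}, and of the skew-symmetry of $d\tau|_\DD$, is the main obstacle, and I expect it to force both $Z_0=0$ and $d\tau|_\DD=0$, hence $S|_\DD=-\id$ and $S\xi=-\xi$.

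Finally, once $S=-\id$ on all of $TM$, one computes $(\nabla_XS)Y=\nabla_X(-Y)+\nabla_XY=0$, so the Codazzi equation \eqref{eq::CodazziS} collapses to $\tau(X)\,Y=\tau(Y)\,X$ for all $X,Y\in\XM$; choosing $X,Y$ linearly independent forces $\tau=0$, which completes the proof.
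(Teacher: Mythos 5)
Your opening two steps are correct and in fact streamline part of the published argument: the computation $Q(\xi,W,Z)=h(Z,\varphi SW)-h(W,\varphi Z)$ checks out, the total symmetry of $Q$ (i.e.\ Codazzi for $h$) then gives $h(Z,\varphi SW)-h(W,\varphi SZ)=2h(W,\varphi Z)$ --- the same identity the paper extracts from Codazzi for $S$ on the way to \eqref{eq::PierwszyWzor} --- and combining with \eqref{FormaKubiczna::eq::1} does yield $S\varphi=\varphi S$ on $\DD$ together with $h(SW,Z)+h(W,SZ)=-2h(W,Z)$; with the Ricci equation \eqref{eq::Ricci} this gives $h(SW,Z)=-h(W,Z)-d\tau(W,Z)$ on $\DD$, as you say. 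Your closing step ($S=-\id$ makes $\nabla S=0$, so Codazzi forces $\tau(X)Y=\tau(Y)X$ and hence $\tau=0$) also matches the paper.

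The genuine gap is exactly where you write ``the decisive, and hardest, step'': the claim that the Codazzi equation for $S$ forces $d\tau|_{\DD\times\DD}=0$ and $Z_0=0$ is never carried out, and the tool you propose cannot deliver the first of these. Write $SW=-W+TW$ on $\DD$, where $T$ is the $h$-skew operator with $h(TW,Z)=-d\tau(W,Z)$. The $\eta$-projection of \eqref{eq::CodazziS} on $\DD$-fields reduces, via \eqref{WniosekWzory::eq::1}, to $d\tau(V,\varphi W)=d\tau(W,\varphi V)$, which is precisely the symmetry you already have from $S\varphi=\varphi S$ --- so that part of your anticipation is right, but it produces nothing new. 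The $\DD$-projection, however, is a first-order differential identity in the unknowns $\nabla T$ and $\nabla Z_0$ (schematically $\pi_{\DD}\bigl(\nabla_W(TV)-\nabla_V(TW)\bigr)-T\pi_{\DD}([W,V])-2h(W,\varphi V)Z_0=\tau(W)(TV-V)-\tau(V)(TW-W)$), and the covariant derivatives of $T$ entering it are not pinned down by anything established so far; a relation of this kind cannot force the pointwise algebraic vanishing $T=0$. What is genuinely needed --- and what the paper does --- is second-order information: evaluate $(R(W,\varphi W)\cdot h)(\varphi W,\varphi W)$ once by the Gauss equation \eqref{eq::Gauss}, obtaining $2h(W,W)h(SW,\varphi W)$, and once by expanding the iterated covariant derivatives of $h$ through the cubic-form identities of Lemma \ref{lm::FormaKubiczna}, obtaining $-6h(W,W)h(SW,\varphi W)$; nondegeneracy then gives $h(SW,\varphi W)=0$, which in your notation is exactly the missing vanishing of the symmetric form $d\tau(W,\varphi W)$, and together with your commutation relation it kills $d\tau$ on $\DD$. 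The Gauss equation appears nowhere in your plan, and without it (or an equivalent curvature argument) the crux remains an unsubstantiated expectation; only the subsequent elimination of $Z_0$ via Codazzi for $S$, once $S|_{\DD}=-\id$ is in hand, goes through as you sketch it, in the same way as in the paper.
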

\begin{proof}
Let $W,Z\in\DD$. Formulas (\ref{eq::CodazziS}), (\ref{WniosekWzory::eq::7}) and (\ref{etanaD::eq::0}) imply
that
$$
\eta(\nabla_WSZ)-\eta(S(\nabla_WZ))=\eta(\nabla_ZSW)-\eta(S(\nabla_ZW)).
$$
Thus, by (\ref{Wzory::eq::6}) and (\ref{eq::etaXhXsi}),
$$
\eta(\nabla_WSZ)-\eta(\nabla_ZSW)=\eta(S([W,Z]))=-\eta([W,Z])=\eta ([Z,W]).
$$
By Corollary  \ref{wn::Wzory} (the formulas (\ref{WniosekWzory::eq::1}), (\ref{WniosekWzory::eq::4})) we get
$$
h(W,\varphi SZ)-h(Z,\varphi SW)=-h(W,\varphi Z)+h(Z,\varphi W).
$$
Replacing $Z$ with $\varphi Z$ and using the fact that
$(\varphi,\xi,\eta,h)$ is a metric structure we have
\begin{equation}\label{eq::PierwszyWzor}
-h(\varphi W,S\varphi Z)+h(Z,SW)=-2h(W,Z)\quad\text{for every $W,Z\in\DD$}.
\end{equation}
Using the Gauss equation we get
\begin{align}\label{eq::SiTauA}
(R(W,\varphi W)\cdot h)(\varphi W,\varphi W)&=-2h(R(W,\varphi
W)\varphi W,\varphi W)\\\nonumber &=2h(W,W)h(SW,\varphi W)
\end{align}
for every $W\in\DD$. On the other hand
\begin{align*}
(R(W,\varphi W)\cdot h)(\varphi W,\varphi
W)=(\nabla_W\nabla_{\varphi W}h)(\varphi W,\varphi
W)\\-(\nabla_{\varphi W}\nabla_{W}h)(\varphi W,\varphi
W)-(\nabla_{[W,\varphi W]}h)(\varphi W,\varphi W).
\end{align*}
The following formulas are obvious:
\begin{align*} % SZTUCZKA Z \hspace{-2pt} !!!!
(\nabla_W\hspace{-2pt}\nabla_{\varphi W}h)(\varphi W,\varphi
W)\hspace{-1pt}=\hspace{-1pt}W((\nabla_{\varphi W}h)(\varphi W,\varphi W))-\hspace{-2pt}2(\nabla_{\varphi
W}h)(\nabla_W\varphi W,\varphi W),\\
(\nabla_{\varphi W}\hspace{-2pt}\nabla_{W}h)(\varphi W,\varphi W)\hspace{-1pt}=\hspace{-1pt}{\varphi
W}((\nabla_{W}h)(\varphi W,\hspace{-1pt}\varphi W))-\hspace{-2pt}2(\nabla_{W}h)(\nabla_{\varphi
W}\varphi W,\hspace{-1pt}\varphi W).
\end{align*}
We have
$$
(\nabla_Xh)(Y,Z)=Q(X,Y,Z)-\tau(X)h(Y,Z)
$$
for every $X,Y,Z\in \X(M)$.
Thus Lemma \ref{lm::FormaKubiczna} and the above formulas imply
\begin{align*}
(\nabla_W\nabla_{\varphi W}h)&(\varphi W,\varphi W)=W\Big(Q(\varphi
W,\varphi W,\varphi W)-\tau(\varphi W)h(\varphi W,\varphi
W)\Big )\\&\hspace{12pt}-2Q(\varphi W,\nabla_W\varphi W,\varphi W)+2\tau(\varphi
W)h(\nabla_W\varphi W,\varphi W)\\
&=-W(\tau(\varphi W))h(\varphi W,\varphi W)-\tau(\varphi W)W(h(\varphi W,\varphi
W))\\&\hspace{12pt}+2Q(\nabla_W\varphi W,W,W)+2\tau(\varphi W)h(\nabla_W\varphi
W,\varphi W)\\
&=W(\tau(\varphi W))h(W,W)-\tau(\varphi W)(\nabla_Wh)(\varphi
W,\varphi W)\\
&\hspace{12pt}+2h(W,W)Q(\xi,W,W)\\
&=W(\tau(\varphi W))h(W,W)-\tau(\varphi
W)\tau(W)h(W,W)\\&\hspace{12pt}+2h(W,W)Q(\xi,W,W)
\end{align*}
where, in the last equality, we used (\ref{WniosekWzory::eq::1}). In a similar way we obtain
\begin{align*}
(\nabla_{\varphi W}\nabla_{W}h)&(\varphi W,\varphi W)=\varphi
W(Q(W,\varphi W,\varphi W)-\tau(W)h(\varphi W,\varphi
W))\\&\hspace{12pt}-2Q(W,\nabla_{\varphi W}\varphi W,\varphi
W)+2\tau(W)h(\nabla_{\varphi W}\varphi W,\varphi W)\\
&=-\varphi W(\tau(W))h(\varphi W,\varphi W)-\tau(W)\varphi W(h(\varphi W,\varphi W
))\\&\hspace{12pt}+2\tau(W)h(\nabla_{\varphi W}\varphi W,\varphi W)\\
&=\varphi W(\tau(W))h(W,W)-\tau(W)(\nabla_{\varphi W}h)(\varphi
W,\varphi W)\\
&=\varphi W(\tau(W))h(W,W)-\tau(W)\tau(\varphi W)h(W,W).
\end{align*}
From (\ref{WniosekWzory::eq::4}) and Lemma \ref{lm::FormaKubiczna} we also have
\begin{align*}
(\nabla_{[W,\varphi W]}h)(\varphi W,\hspace{-1pt}\varphi W)\hspace{-1pt}&=\hspace{-1pt}Q([W,\hspace{-1pt}\varphi
W],\hspace{-1pt}\varphi W,\hspace{-1pt}\varphi W)\hspace{-1pt}-\hspace{-1pt}\tau([W,\varphi W])h(\varphi W,\hspace{-1pt}\varphi
W)\\
&=-\eta([W,\varphi W])Q(\xi,W,W)+\tau([W,\varphi W])h(W,W)\\
&=-2h(W,W)Q(\xi,W,W)+\tau([W,\varphi W])h(W,W).
\end{align*}
Using (\ref{FormaKubiczna::eq::3}) and the Ricci equation (\ref{eq::Ricci}), we get
\begin{align}\label{eq::SiTauB}
2Q(\xi,W,W)=-h(SW,\varphi W)+h(W,S\varphi W)=2d\tau(W,\varphi W).
%(R(W,\varphi W)\cdot h)(\varphi W,\varphi W)=-2 d \tau(W,\varphi
%W)h(W,W)\\\nonumber-4Q(\xi,W,W)h(W,W)=(h(SW,\varphi
%W)-h(W,S\varphi W))h(W,W)\\\nonumber+4h(SW,\varphi W)h(W,W)
%=6h(SW,\varphi W)h(W,W)
\end{align}
From (\ref{eq::SiTauB}) and the preceding formulas, we obtain
$$
(R(W,\varphi W)\cdot h)(\varphi W,\varphi W)=6d\tau(W,\varphi W)h(W,W)
$$
and so, by (\ref{eq::SiTauB}) and (\ref{FormaKubiczna::eq::3}),
$$
(R(W,\varphi W)\cdot h)(\varphi W,\varphi W)=6Q(\xi,W,W)h(W,W)=-6h(W,W)h(SW,\varphi W),
$$
which, combined with (\ref{eq::SiTauA}), yields
$$
h(W,W)\cdot h(SW,\varphi W)=0 %h(W,W)=0
$$
for every $W\in\DD$. Using the fact that $h$ is nondegenerate on $\DD$ we get
\begin{equation}\label{eq::SiTauC}
h(SW,\varphi W)=0 %h(W,W)=0
\end{equation} for every $W\in \DD$.

Now (\ref{eq::SiTauC}) implies
$$
0=h(S(W+2\varphi Z),\varphi W+2 Z)=2h(SW,Z)+2h(S\varphi
Z,\varphi W).
$$
Therefore
$$
h(S\varphi Z,\varphi W)=-h(SW,Z).
$$
By (\ref{eq::PierwszyWzor}) we also have
$$
h(S\varphi Z,\varphi W)=2h(W,Z)+h(SW,Z).
$$
The above formulas imply that

$$
h(SW,Z)=-h(W,Z)
$$
for every $Z\in\DD$. Now, since $\DD$ is nondegenerate and by (\ref{WniosekWzory::eq::7}) it follows that
$$
SW=-W
$$
for every $W\in\DD$. From Lemma \ref{lm::wzory::EST} we get
\begin{align}\label{eq::SiTau::PostacS}
SX=-X+\eta (X)Z_0
\end{align} for every $X\in X(M)$.
We shall show that $Z_0=0$. Assume $Z_0\neq 0$, then using the Codazzi equation for $S$ we have
$$
\nabla_WSZ_0-S(\nabla_WZ_0)-\tau(W)SZ_0=\nabla_{Z_0}SW-S(\nabla_{Z_0}W)-\tau(Z_0)SW.
$$
Since $\tau(Z_0)=0$ (Lemma \ref{lm::wzory::EST}), using (\ref{eq::SiTau::PostacS}) we can
rewrite the above equality in the form
$$
-\eta(\nabla_WZ_0)Z_0+\tau(W)Z_0=-\eta(\nabla_{Z_0}W)Z_0,
$$
Now, by (\ref{WniosekWzory::eq::4}) and (\ref{WniosekWzory::eq::9}) we have
$$
-\tau(W)Z_0=\eta([Z_0,W])Z_0=-2h(W,\varphi Z_0)Z_0=2\tau(W)Z_0.
$$
The last equality implies that $\tau|_\DD=0$. Now, formula (\ref{WniosekWzory::eq::9}) implies
$Z_0=0$ which contradicts our assumption.
\par The property $\tau=0$ easily follows from the fact that $S=-\id$ and the equation of Codazzi for $S$.
The proof is completed.
\end{proof}
The following theorem gives equivalent conditions for being the induced almost paracontact metric structure.
\begin{tw}
Let $f\colon M\rightarrow \R^{2n+2}$ be a nondegenerate hypersurface with a $\widetilde{J}$-tangent transversal vector field
and let $(\varphi,\xi,\eta)$ be the induced almost paracontact structure on $M$.
The following conditions are equivalent:
\begin{align}
&\text{$(\varphi,\xi,\eta,h)$ is an almost paracontact metric structure},\\
&\text{$(\varphi,\xi,\eta,h)$ is a para $\alpha$-contact metric structure, where $\alpha =-1$},\\
&\text{$(\varphi,\xi,\eta,h)$ is a para $\alpha$-Sasakian structure, where $\alpha =-1$}.
\end{align}
\end{tw}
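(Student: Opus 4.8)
The plan is to prove the three-way equivalence by establishing the cyclic chain of implications $(3.20)\Rightarrow(3.22)\Rightarrow(3.21)\Rightarrow(3.20)$, since the direction $(3.20)\Rightarrow(3.22)$ already carries all the hard geometric content of Theorem~\ref{tw::SiTau=0}. First I would assume $(\varphi,\xi,\eta,h)$ is an almost paracontact metric structure. By Theorem~\ref{tw::SiTau=0} we immediately have $S=-\id$ and $\tau=0$. The task is then to verify the para~$\alpha$-Sasakian defining equation $(\nabLC_X\varphi)(Y)=-\bigl(-h(X,Y)\xi+\eta(Y)X\bigr)$ with $\alpha=-1$, where $\nabLC$ is the Levi-Civita connection of $g=h$. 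I would exploit Theorem~\ref{tw::equivalentforparaalfasasakian}: it suffices to show the structure is normal and para~$(-1)$-contact, which splits the work into two checkable conditions.

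For normality I would invoke Proposition~\ref{tw::WarunekNaNormalnosc}: the induced structure is normal iff $S\varphi Z-\varphi SZ+\tau(Z)\xi=0$ for every $Z\in\DD$. With $S=-\id$ and $\tau=0$ this reduces to $-\varphi Z+\varphi Z=0$, which holds trivially, so normality is automatic. For the para~$(-1)$-contact condition I must show $d\eta(X,Y)=-h(X,\varphi Y)$ for all $X,Y$. Using $\tau=0$ and the Ricci equation $(\ref{eq::Ricci})$, together with $S=-\id$, gives $h(X,SY)-h(SX,Y)=-h(X,Y)+h(X,Y)=0=2d\tau(X,Y)$, consistent with $\tau=0$. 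The substantive computation is to evaluate $d\eta$ directly from formulas $(\ref{Wzory::eq::3})$ and $(\ref{Wzory::eq::5})$. Since $d\eta(X,Y)=\tfrac12\bigl(X(\eta(Y))-Y(\eta(X))-\eta([X,Y])\bigr)$, substituting $(\ref{Wzory::eq::3})$ with $\tau=0$ yields $d\eta(X,Y)=\tfrac12\bigl(-h(X,\varphi Y)+h(Y,\varphi X)\bigr)$. Using $h(Y,\varphi X)=-h(X,\varphi Y)$, which follows because $h$ is metric and $\varphi$ is $h$-antisymmetric (a consequence of $(\ref{Def::eq::metric})$ applied suitably), this collapses to $d\eta(X,Y)=-h(X,\varphi Y)=g(X,\varphi Y)\cdot(-1)$, establishing the para~$(-1)$-contact property.

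Next I would close the remaining implications, which are essentially definitional. The implication $(3.22)\Rightarrow(3.21)$ is immediate: a para~$\alpha$-Sasakian structure is in particular an almost paracontact metric structure that is para~$\alpha$-contact, by the forward direction of Theorem~\ref{tw::equivalentforparaalfasasakian}. The implication $(3.21)\Rightarrow(3.20)$ is trivial by inclusion, since a para~$\alpha$-contact metric structure is by definition an almost paracontact metric structure. This completes the cycle and hence the equivalence.

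The main obstacle I anticipate is confirming the antisymmetry $h(X,\varphi Y)=-h(Y,\varphi X)$ needed to simplify $d\eta$. One must check it separately on $\DD$ and on the $\xi$-direction: on $\DD$ it follows from $(\ref{Def::eq::metric})$ since $\varphi=\widetilde J$ acts as an almost paracomplex structure and $h(\varphi Z,\varphi W)=-h(Z,W)$ forces $h(\varphi Z,W)+h(Z,\varphi W)=0$ upon replacing $W$ by $\varphi W$; for terms involving $\xi$ one uses $\varphi\xi=0$ and $\eta(X)=h(X,\xi)$ from $(\ref{WniosekWzory::eq::6})$ to see both sides vanish. Once this symmetry is in hand, the rest of the argument is formal, relying entirely on Theorem~\ref{tw::SiTau=0} and the characterizations already proved.
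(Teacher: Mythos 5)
Your proposal is correct and follows essentially the same route as the paper: Theorem \ref{tw::SiTau=0} gives $S=-\id$ and $\tau=0$, formula (\ref{Wzory::eq::3}) then yields the para $(-1)$-contact condition, Proposition \ref{tw::WarunekNaNormalnosc} gives normality, and Theorem \ref{tw::equivalentforparaalfasasakian} closes the Sasakian implication, with the remaining implications definitional. Your explicit verification of the antisymmetry $h(X,\varphi Y)=-h(Y,\varphi X)$ (on $\DD$ and in the $\xi$-direction) is a detail the paper leaves implicit, but it is correct and does not change the argument.
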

\begin{proof} If $(\varphi,\xi,\eta,h)$ is an almost paracontact metric structure then
by Theorem \ref{tw::SiTau=0} we obtain $\tau=0$. Theorem \ref{tw::Wzory} (eq. (\ref{Wzory::eq::3})) implies that
$(\varphi,\xi,\eta,h)$ is a para $(-1)$-contact metric structure. Again by Theorem \ref{tw::SiTau=0} we get $S=-\id$. Hence
$(\varphi,\xi,\eta)$ is normal (Prop. \ref{tw::WarunekNaNormalnosc}). Now Theorem \ref{tw::equivalentforparaalfasasakian} completes
the proof.
\end{proof}
Using Pick-Berwald theorem we get
\begin{tw} Let $f\colon M\rightarrow \R^{2n+2}$ be a nondegenerate hypersurface with a $\widetilde{J}$-tangent transversal vector field
and let $(\varphi,\xi,\eta)$ be the induced almost paracontact structure on $M$.
If $(\varphi,\xi,\eta,h)$ is the almost paracontact metric
structure, then $f(M)$ is a piece of a hyperquadric.
\end{tw}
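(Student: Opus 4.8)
The plan is to deduce from Theorem~\ref{tw::SiTau=0} that the cubic form $Q$ vanishes identically and then to invoke the Pick--Berwald theorem. By Theorem~\ref{tw::SiTau=0} the hypothesis forces $S=-\id$ and $\tau=0$, so the defining relation (\ref{eq::CubicForm}) reduces to $Q(X,Y,Z)=(\nabla_Xh)(Y,Z)$; recall that $Q$ is symmetric in all three arguments and that $TM=\DD\oplus\langle\xi\rangle$, so it suffices to check the vanishing of $Q$ on triples built from $\DD$ and $\xi$.

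First I would dispose of the three possible types of mixed triples. On $\DD\times\DD\times\DD$ the vanishing is exactly (\ref{FormaKubiczna::eq::2}). For the terms with one $\xi$, note that (\ref{Def::eq::metric}) makes $\varphi$ skew with respect to $h$, so $h(W,\varphi W)=0$ for $W\in\DD$; hence by (\ref{FormaKubiczna::eq::3}) together with $S=-\id$ we obtain $Q(\xi,W,W)=-h(SW,\varphi W)=h(W,\varphi W)=0$, and polarizing in the last two slots gives $Q(\xi,W_1,W_2)=0$ for all $W_1,W_2\in\DD$. For the terms with two copies of $\xi$, use (\ref{WniosekWzory::eq::6}) to get $h(\xi,\xi)=\eta(\xi)=1$, a constant, so $Q(X,\xi,\xi)=(\nabla_Xh)(\xi,\xi)=-2h(\nabla_X\xi,\xi)=-2\eta(\nabla_X\xi)=-2\tau(X)=0$ by (\ref{Wzory::eq::5}); symmetry then handles the remaining triples involving two or three copies of $\xi$. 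Since these cases exhaust all triples from a spanning set, $Q\equiv0$.

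With $\tau=0$ the identity $Q\equiv0$ reads $\nabla h=0$, that is, the induced connection $\nabla$ coincides with the Levi-Civita connection $\widehat{\nabla}$ of $h$ and the Fubini--Pick cubic form is zero. To put the Pick--Berwald theorem to work in its classical (Blaschke) form, I would observe that $\tau=0$ and $\nabla h=0$ make both the induced volume form and the metric volume form $\nabla$-parallel, hence proportional by a constant; rescaling $C$ by a suitable constant (which leaves $\nabla$, $\tau$ and the relation $\nabla h=0$ untouched) turns $C$ into the affine normal. The Pick--Berwald theorem then yields that $f(M)$ is a piece of a hyperquadric.

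The computational part is routine once Lemma~\ref{lm::FormaKubiczna} and Theorem~\ref{tw::SiTau=0} are available; the only genuinely delicate point I anticipate is the passage to Pick--Berwald, namely checking that the vanishing of the cubic form for our \emph{a priori} non-Blaschke $\widetilde{J}$-tangent field is legitimately reduced, through the constant rescaling above, to the standard statement for the affine normalization.
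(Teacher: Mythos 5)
Your proof is correct, and its core is the same as the paper's: both establish $Q\equiv 0$ by the identical case split over $TM=\DD\oplus\langle\xi\rangle$, using (\ref{FormaKubiczna::eq::2}) for pure $\DD$-triples, (\ref{FormaKubiczna::eq::3}) with $S=-\id$ and $h$-skewness of $\varphi$ plus polarization for $Q(\xi,W_1,W_2)$, and $\tau=0$ via (\ref{Wzory::eq::5}) and (\ref{WniosekWzory::eq::6}) for $Q(X,\xi,\xi)$, exactly as in the paper's proof. The one place you diverge is the final appeal to Pick--Berwald: the paper invokes it in one line, which is legitimate because the version in the cited reference \cite{NS} (Theorem II.4.5) is stated for an \emph{arbitrary} transversal vector field, not only for the Blaschke normal; so your reduction to the Blaschke case is not needed. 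That said, your reduction is itself sound and is in effect a proof of the general version in this situation: with $\tau=0$ and $\nabla h=0$ both the induced volume form and the $h$-volume form are $\nabla$-parallel, hence proportional by a constant on connected $M$, and a constant rescaling of $C$ (which preserves $\nabla$, $\tau=0$ and $\nabla h=0$, since no tangential component is added) makes $C$ satisfy both apolarity-free Blaschke conditions, whence $C$ is the affine normal up to sign by uniqueness of the Blaschke normalization. One could even shortcut your ``delicate point'' differently: if $Q=0$ for some transversal field, writing the affine normal as $\phi C+Z$ and using apolarity of the Blaschke cubic form forces $Z=0$, so vanishing of $Q$ passes to the affine normal automatically; either way the classical statement applies and your argument closes the gap you flagged.
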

\begin{proof}
It is enough to show that $Q\equiv 0$. By Lemma \ref{lm::FormaKubiczna} we have
$$
Q(W_1,W_2,W_3)=0 \quad\text{for every $W_1,W_2,W_3\in\DD$}
$$
and
$$
Q(\xi,W_1,W_2)=0\quad\text{for every $W_1,W_2\in\DD$}.
$$
Since $\tau=0$ by Theorem \ref{tw::SiTau=0}, using (\ref{Wzory::eq::1}) and (\ref{WniosekWzory::eq::6}) we obtain
$$
Q(X,\xi,\xi)=-2h(\nabla_X\xi,\xi)=-2\eta(\nabla_X\xi)=0
$$
for every $X\in\X(M)$. The above equalities imply that
$$
Q(X_1,X_2,X_3)=0
$$
for every $X_1,X_2,X_3\in\X(M)$.
\end{proof}
Finally, we can find an explicit formula for such hyperquadrics. We have the following theorem
\begin{tw}
The nondegenerate hyperquadric of center $0$ such that the induced almost paracontact structure $(\varphi, \xi, \eta)$ is metric relative to the second fundamental form $h$ can be expressed in the form
\begin{align*}
H=\{x\in\R^{2n+2}\colon x^{T}Ax=1\},
\end{align*}
where $\det A\neq 0$ and
$$
A=\left[
\begin{matrix}
P & R \\
-R & -P
\end{matrix}
\right],
$$
$P^T=P$, $R^T=-R$, $P,R\in M(n+1,n+1,\mathbb{R})$.
\par Moreover, the induced almost paracontact structure for hyperquadrics of the above form is metric relative to the second fundamental form.
\end{tw}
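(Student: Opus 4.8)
The plan is to reduce the block description of $A$ to the coordinate-free statement that $A$ is symmetric and anticommutes with $\widetilde{J}$, i.e. $A\widetilde{J}+\widetilde{J}A=0$ (equivalently $\widetilde{J}A\widetilde{J}=-A$); a one-line block computation shows this is exactly the asserted form with $P^{T}=P$ and $R^{T}=-R$. Throughout I would use the centroaffine normalization of the central quadric $H=\{x\in\R^{2n+2}\colon x^{T}Ax=1\}$, taking the transversal field to be the position vector $C=f$. Since $\D_{X}C=f_{\ast}X$, comparison with $\D_{X}C=-f_{\ast}(SX)+\tau(X)C$ immediately gives $S=-\id$ and $\tau=0$; differentiating the relation $f^{T}AY=0$ (valid for tangent $Y$) along $X$ then yields $h(X,Y)=-X^{T}AY$, and by definition $\xi=\widetilde{J}C=\widetilde{J}f$.

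For the sufficiency (the ``Moreover'' part) I would begin from an $A$ with $A\widetilde{J}+\widetilde{J}A=0$. Then $A\widetilde{J}$ is antisymmetric, so $x^{T}A\widetilde{J}x=0$ on $H$; hence $\widetilde{J}f$ is tangent and $C=f$ is genuinely $\widetilde{J}$-tangent (and transversal, since $f^{T}Af=1$), so $\dim\DD=2n$ and the induced structure $(\varphi,\xi,\eta)$ is defined. From $S=-\id$ and (\ref{Wzory::eq::6}) one gets $\eta(X)=h(X,\xi)$. The heart of this direction is the metric identity (\ref{Def::eq::metric}): writing $\varphi X=\widetilde{J}X-\eta(X)f$ and expanding $h(\varphi X,\varphi Y)=-(\varphi X)^{T}A(\varphi Y)$, the anticommutation relation converts the leading term $(\widetilde{J}X)^{T}A(\widetilde{J}Y)$ into $-X^{T}AY$, while the three remaining terms combine, using $f^{T}Af=1$, to $+\eta(X)\eta(Y)$; this is precisely $h(\varphi X,\varphi Y)=-h(X,Y)+\eta(X)\eta(Y)$. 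The signature is then automatic: restricting this identity to the eigendistributions $\DD^{+},\DD^{-}$ (where $\varphi=\pm\id$ and $\eta=0$) forces $h|_{\DD^{\pm}}=0$, so $\DD^{+},\DD^{-}$ are complementary $h$-isotropic $n$-planes, giving signature $(n,n)$ on $\DD$; since $h(\xi,\xi)=\eta(\xi)=1$ and $\xi\perp_{h}\DD$, the total signature is $(n+1,n)$, and nondegeneracy of $h$ follows from $\det A\neq0$.

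For the necessity I would assume $(\varphi,\xi,\eta,h)$ is an almost paracontact metric structure. Theorem \ref{tw::SiTau=0} gives $S=-\id$ and $\tau=0$, whence $\D_{X}(C-f)=0$ and so $C=f+v_{0}$ for some fixed $v_{0}\in\R^{2n+2}$. The $\widetilde{J}$-tangency of $C$ reads $x^{T}A\widetilde{J}x+x^{T}A\widetilde{J}v_{0}=0$ for all $x\in H$. Here the centering hypothesis is decisive: since $-x\in H$ whenever $x\in H$, I would split this into its even and odd parts in $x$, obtaining separately $x^{T}A\widetilde{J}x=0$ and $x^{T}A\widetilde{J}v_{0}=0$ on $H$. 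A nondegenerate central quadric is contained in no hyperplane and hence spans $\R^{2n+2}$, so the second relation forces $A\widetilde{J}v_{0}=0$ and is irrelevant to what follows; for the first, evaluating on the open cone $\{x^{T}Ax>0\}$ (on which $x/\sqrt{x^{T}Ax}\in H$) shows that the polynomial $x^{T}A\widetilde{J}x$ vanishes on a nonempty open set, hence identically, so the symmetric part of $A\widetilde{J}$ vanishes, i.e. $A\widetilde{J}+\widetilde{J}A=0$. This is exactly the required block form.

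The main obstacle is the necessity step, and inside it two points in particular. First, the transversal field is pinned down only up to the constant $v_{0}$; I must check (as above) that $v_{0}$ enters only through the odd part of the tangency relation and is annihilated by nondegeneracy, so it does not obstruct the conclusion. Second, and more essential, is the passage from ``$x^{T}A\widetilde{J}x=0$ on $H$'' to ``$\mathrm{sym}(A\widetilde{J})=0$'': this genuinely needs both the center symmetry, to isolate the purely quadratic part, and nondegeneracy, to run the open-cone argument; because the ambient form is indefinite one must also note that $\{x^{T}Ax>0\}$ is nonempty and open, which it is. By contrast the block/anticommutation equivalence and the expansion establishing (\ref{Def::eq::metric}) are routine once the normalization $C=f$ and $h(X,Y)=-X^{T}AY$ are in hand.
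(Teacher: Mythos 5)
Your proposal is correct and follows essentially the same route as the paper: the centroaffine normalization with $S=-\id$, $\tau=0$ identifying the transversal field with the position vector, the tangency of $\widetilde{J}x$ yielding $x^{T}(\widetilde{J}A+A\widetilde{J})x=0$ on $H$ and hence $\widetilde{J}A=-A\widetilde{J}$, the block-form equivalence, and, for the converse, verification of $h(\varphi X,\varphi Y)=-h(X,Y)+\eta(X)\eta(Y)$ via the relation $h(X,Y)=-X^{T}AY$ (the paper's version of this is the computation of $\langle D_{\widetilde{J}Z}\widetilde{J}W,Ax\rangle$ against $\langle D_{Z}W,Ax\rangle$). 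Where you go beyond the paper you only supply details it leaves implicit rather than taking a different path: the constant $v_{0}$ in $C=f+v_{0}$, which you eliminate by center symmetry and nondegeneracy where the paper simply sets $C:=x$; the open-cone argument showing that a quadratic form vanishing on $H$ vanishes identically, which the paper asserts in one line; and the $\xi$-direction identities ($h(\xi,Z)=0$, $h(\xi,\xi)=1$) together with the signature count $(n+1,n)$ via the $h$-isotropic eigendistributions $\DD^{+},\DD^{-}$, which the paper omits when it reduces the metric condition to $\DD$.
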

\begin{proof}
Every nondegenerate hyperquadric of center $0$ has a form
\begin{align*}
H=\{x\in\R^{2n+2}\colon x^{T}Ax=1\},
\end{align*}
where $\det A\neq 0$, $A^T=A$, $A\in M(2n+2,2n+2,\mathbb{R})$.
Since $(\varphi, \xi, \eta)$ is the induced metric structure, Theorem \ref{tw::SiTau=0} implies that the shape operator $S=-\id$ and $\tau=0$.
So $C:=x$ is a $\widetilde{J}$-tangent transversal vector field and $\xi=\widetilde{J}x$ is tangent. Since $Ax$ is orthogonal (relative to the standard inner product $\langle\cdot,\cdot\rangle$ on $\R^{2n+2}$) to $H$ we have
$$
0=\langle \widetilde{J}x, Ax\rangle = x^T\widetilde{J}Ax
$$
for every $x\in H$.
We also have
$$
x^TA\widetilde{J}x=0,
$$
so in consequence
\begin{align}
\label{hgjajt::eq::4}&x^{T}(\widetilde{J}A+A\widetilde{J})x=0
\end{align}
for every $x\in H$.
Since $\widetilde{J}A+A\widetilde{J}$ is symmetric, formula (\ref{hgjajt::eq::4}) implies that
\begin{align*}
\widetilde{J}A=-A\widetilde{J}.
\end{align*}

The last formula implies that
$$
A=\left[
\begin{matrix}
P & R \\
-R & -P
\end{matrix}
\right],
$$
$P^T=P$, $R^T=-R$, $P,R\in M(n+1,n+1,\mathbb{R})$.
\par Now, we shall show that the induced almost paracontact structure is metric. Since $\langle Ax,C\rangle=2$, it is enough to prove that
\begin{align*}
\langle D_{\widetilde{J}Z}\widetilde{J}W,Ax \rangle=-\langle D_{Z}W,Ax \rangle
\end{align*} for every $Z,W\in\DD$.
For $Z,W\in\DD$ we have
\begin{align}
 \label{eq::8}&\langle Z ,Ax \rangle =0,\\
  \label{eq::9}&\langle \widetilde{J}Z,Ax\rangle =0,\\
   \label{eq::10}&\langle W,Ax\rangle =0,\\
    \label{eq::11}&\langle \widetilde{J}W,Ax\rangle =0.
\end{align}
We also have
 \begin{align}
 \label{j::eq::12}&\langle \widetilde{J}X,Y\rangle =\langle X,\widetilde{J}Y\rangle
 \end{align} for every $X,Y$ tangent to hyperquadric. Using the fact that $DA=0$ we obtain
 \begin{align}
 \label{D::eq::13}&D_ZAx=AZ
 \end{align} and
 \begin{align}
 \label{DJ::eq::14}&D_{\widetilde{J}Z}Ax=A\widetilde{J}Z.
 \end{align}
Since  $D\langle\cdot,\cdot\rangle = 0$ we also have
 \begin{equation*}
 \langle D_{\widetilde{J}Z}\widetilde{J}W,Ax\rangle =\widetilde{J}W(\langle \widetilde{J}Z,Ax\rangle )-\langle \widetilde{J}W,D_{\widetilde{J}Z}Ax\rangle .
 \end{equation*}
 Using (\ref{eq::9}), (\ref{DJ::eq::14}) and (\ref{j::eq::12}) we obtain
 \begin{align*}
 \langle D_{\widetilde{J}Z}\widetilde{J}W,Ax\rangle  =-\langle \widetilde{J}W,A\widetilde{J}Z\rangle =\langle \widetilde{J}W,\widetilde{J}AZ\rangle
 =\langle W,AZ\rangle
 \end{align*}
On the other hand (\ref{D::eq::13}), (\ref{eq::10}) and $D\langle\cdot,\cdot\rangle = 0$ imply
 \begin{equation*}
 \langle W,AZ\rangle =\langle W,D_ZAx\rangle=-\langle D_ZW,Ax\rangle,
 \end{equation*}
 what completes the proof.
\end{proof}
\emph{This Research was financed by the Ministry of Science and Higher Education of the Republic of Poland.}

\bibliographystyle{aplain}

\vspace{1cm}
\par \ \\
Zuzanna Szancer\\
Department of Applied Mathematics, \\
University of Agriculture in Krakow, \\
253 Balicka St., 30-198 Krakow, Poland\\
e-mail: Zuzanna.Szancer@urk.edu.pl

\end{document}